\newtheorem{theorem}{Theorem}
\newtheorem{lemma}{Lemma}
\newtheorem{proposition}{Proposition}
\theoremstyle{remark}
\newtheorem{remark}{Remark}
\newcommand{\rafa}[1]{{\color{black} #1}}
\newcommand{\victor}[1]{{\color{black} #1}}
\newcommand{\ra}[1]{{\color{black} #1}}
\title[General one dimensional Spin Models with Markovian Structure]{Existence of Gibbs states and maximizing measures on a general one-dimensional lattice system with markovian structure} 
\author[Rafael Rig\~ao Souza and Victor Vargas]{}
\thanks{The second author is supported by FFJC-MINCIENCIAS Process 80740-628-2020.}
\begin{document}
\maketitle

\centerline{\scshape Rafael Rig\~ao Souza}
\medskip
{\footnotesize
 \centerline{Department of Mathematics - IME-UFRGS}
   \centerline{91509-900 Porto Alegre - Brazil}
	   \centerline{rafars@mat.ufrgs.br}
}

\medskip
\centerline{\scshape Victor Vargas}
\medskip
{\footnotesize
 \centerline{School of Mathematics - National University of Colombia}
   \centerline{050034 Medell\'in - Colombia}
	   \centerline{vavargasc@unal.edu.co}
}

\begin{abstract}
Consider a compact metric space $(M, d_M)$ and $X = M^{\mathbb{N}}$. We prove a Ruelle's Perron Frobenius Theorem for a class of compact subshifts with Markovian structure introduced in [Bull. Braz. Math. Soc. 45 (2014), pp. 53-72] which are defined from a continuous function $A : M \times M \to \mathbb{R}$ that determines the set of admissible sequences. \ra{In particular, this class of subshifts includes the finite Markov shifts and models where the alphabet is given by the unit circle $S^1$}. \rafa{Using the involution Kernel, we characterize the normalized eigenfunction of the Ruelle operator associated to its maximal eigenvalue and present an extension of its corresponding Gibbs state to the bilateral approach.} \ra{ From these results, we  prove existence} of equilibrium states and accumulation points at zero temperature in a particular class of countable Markov shifts.   
\end{abstract}
\victor{
{\footnotesize {\bf Mathematics Subject Classification (2020)}: 28Dxx, 37A60, 37D35.}

{\footnotesize {\bf Keywords}: \ra{ Countable Markov Shifts, Entropy, Equilibrium States, Gibbs States, Maximizing measures, Ruelle Operator.}}
}

\maketitle

\section{Introduction}
\label{introduction-section}

The  thermodynamic \victor{formalism has} its origins in the second half of the XX century with the study of problems of minimization of energy in classical mechanics. At that moment, some mathematicians interested in the study of these problems, among them, \victor{Yakov Sinai, David Ruelle and Rufus Bowen, adopted a concept known as Gibbs state from the theoretical physics setting to the ergodic theory (see for instance \cite{zbMATH03482645}, \cite{MR0234697}, \cite{MR511655} and \cite{MR691854}). The so called Gibbs states usually} represent  observables optimizing the \victor{free energy} on a system of particles modeled on a lattice with interactions described by an observable satisfying certain regularity conditions. From a mathematical point of view such Gibbs states are Borelian probability measures defined on a lattice, which \victor{can be} obtained from the eigenvalues and eigenvectors of a transfer operator associated to the \victor{observable that represents the interactions.}

Different approaches to this theory have been studied by many authors in several contexts of  symbolic dynamics, both in compact and non-compact settings. In the seminal work \cite{MR0234697} the thermodynamic formalism on uni-dimensional lattices was presented. That work introduced a useful tool called Ruelle operator, also known in the literature as transfer operator, \victor{which is still today} one of the most important tools used to find \victor{such states that minimize the free energy of the system.} Some years later, in \cite{MR1085356}, these problems were studied in a more interesting dynamical context, known as finite Markov shifts under hypothesis of aperiodicity in the incidence matrix. \victor{In \cite{MR1853808} and \cite{MR1738951}} these results were generalized for the non-compact setting of countable Markov shifts, \victor{both in the topologically mixing and the finitely irreducible case.} \rafa{Another interesting setting, \ra{where the alphabet is given by the unit circle $S^1$, or the unit interval $[0,1]$,} was studied in   \cite{zbMATH05995684} and later in the seminal paper} \victor{about limits at zero temperature in \cite{MR2316198}.} \rafa{After,  \cite{MR2864625} and \cite{MR3377291} considered the case where the alphabet is any compact metric space. We also have some interesting generalizations for bounded Polish metric spaces in \cite{CSS19,1LoVa19} and even non-bounded Polish metric spaces in a linear dynamical approach in \cite{LMSV19}.} 

Among the main utilities of the study of transfer operators are their multiple applications in problems of ergodic optimization using techniques of selection and non-selection at zero temperature, \victor{ as we can see in \cite{MR2316198}  (see also \cite{MR2496111})}. 

\rafa{ Another very important work } in that direction was presented in \cite{MR1958608}, in which was guaranteed uniqueness of the ground state associated to a locally constant potential in the setting of finite Markov shifts. After that, some interesting techniques of renormalization were introduced, allowing to find explicit expressions of the ground state (see for details \cite{MR2818689,MR2176962}). From a non-compact point of view, in \cite{MR2151222} was proved existence of maximizing measures in the context of countable Markov shifts satisfying the finitely primitive condition. After that, in \cite{MR3864383}, a generalization of that result in the case of topologically transitive countable Markov shifts was presented. The uniqueness of the ground state was proved in \cite{MR2800665} in the setting of countable Markov shifts satisfying the BIP property, however, that problem is still open for the topologically mixing case. \ra{When the alphabet is given by the unit circle $S^1$}, problems of selection and non-selection at zero temperature were studied \victor{ in \cite{MR2864625}, \cite{MR2496111}  and \cite{MR2316198} } for the classical approach on the interval $[0, 1]$, and these results were generalized to compact metric spaces in \cite{MR3377291} and to non-compact bounded Polish metric spaces in \cite{CSS19} and \cite{1LoVa19}.

In this paper we present the thermodynamical formalism in a symbolic dynamical context introduced in \victor{\cite{MR3194082} using} approaches similar to the ones that appear in \cite{MR2864625} and \cite{MR1085356}, we are able to prove a Ruelle´s Perron-Frobenius Theorem in our setting, both in the topologically transitive case and the topologically mixing case. Furthermore, using some techniques developed in \cite{1LoVa19}, we guarantee existence of Gibbs states and maximizing measures in an interesting particular case of countable Markov shifts with irreducible incidence matrix that satisfy the BP property (see for instance \cite{ChFr19}), but which is not immersed in the class of countable Markov shifts satisfying either the BIP property or the finitely primitive condition \victor{(see also \cite{MR2151222}, \cite{MR2800665} and \cite{MR1955261}).} 

\rafa{On other hand, the so called involution kernel appears as a useful technique to characterize the eigenfunction associated to the maximal eigenvalue of the Ruelle operator in terms of the eigenprobability and the potential defining the operator} (see for instance \cite{MR2864625,MR3297616}). \victor{An interesting case where are presented explicit expressions of the eigenprobability in terms of a potential $\varphi$ satisfying $\mathcal{L}_\varphi(1) = 1$, can be found in \cite{MR3656287} and \cite{Moh18}.} \rafa{ In this work, we present an involution kernel adapted to our matter and we use that to find an expression of the eigenfunction in terms of the 
	involution kernel and the eigenprobability,  and also an optimal transport measure between the Gibbs state associated to a potential satisfying suitable conditions and its corresponding dual.}

This paper is organized as follows: In section \ref{main-results-section} we state the main results of the paper and are included some definitions. In section \ref{theory-PF-section} we present the proofs of Theorem \ref{RPF-theorem} and Proposition \ref{RPF-theorem-SFT}. We also state a variational principle in order to prove Proposition \ref{ground-states}. Finally, in section \ref{involution-kernel-section} we present the proof of Theorem \ref{bilateral-extension}.

\section{Main Results}
\label{main-results-section}

Consider a compact metric space $(M, d_M)$ and define $X$ as the set of sequences taking values in $M$ (the set \rafa{$M$ sometimes is called the alphabet)}. As a consequence of the Tychonoff's Theorem, the set $X$ equipped with the metric 

\begin{equation}
\label{metric}
d(x, y) := \sum_{n = 1}^{\infty}\frac{1}{2^n}d_M(x_n, y_n) \,,
\end{equation}
results in a compact metric space. The {\bf shift map} is defined as the function $\sigma : X \to X$ given by $\sigma((x_n)_{n \in\mathbb{N}}) = (x_{n+1})_{n \in \mathbb{N}}$. \rafa{We consider the Bernoulli system of sequences in $X$ with the shift map acting on it and a suitable potential $\varphi$ from $X$ into $\mathbb{R}$ determining the interactions on the system.} The thermodynamical formalism on this class of models has been widely studied in the setting of compact metric spaces (see for instance \cite{MR2864625, MR3377291, MR2496111}), as well as in the non-compact setting of bounded Polish metric spaces (see for details \cite{CSS19, 1LoVa19}).  

In this paper we stress that condition, studying the thermodynamical formalism on a class of subshifts introduced in \cite{MR3194082} \victor{(see also \cite{LW20} and \cite{LW19}),} in which only some of the sequences belonging to the set $X$ are allowed. 

The set of admissible sequences is characterized in the following way: consider a continuous function $A : M \times M \to \mathbb{R}$ and a compact set $I \subset \mathbb{R}$. We say that a sequence $x = (x_n)_{n \in \mathbb{N}} \in X$ is an {\bf admissible sequence} associated to the map $A$ and the set $I$, if $A(x_n, x_{n + 1}) \in I$ for each $n \in \mathbb{N}$. Through this paper we will denote the set of such admissible sequences by $\mathcal{B}(A, I)$. Given an element $b \in M$, we define the {\bf section of $b$ in $A^{-1}(I)$} as the set of elements $a \in M$ such that $A(a, b) \in I$, which will be denoted by $s(b)$. It is not difficult to check that the continuity of the map $A$ implies that $s(b)$ is a compact subset of $M$ for each $b\in M$.
%\footnote{\rafa{Vitor me parece que este detalhamento a seguir é desnecessário. (Mas se quiseres manter a frase a seguir, não me oponho): By continuity of the map $A$, for any convergent sequence $(a_n)_{n \in \mathbb{N}}$ of elements in $s(b)$ with $\lim_{n \in \mathbb{N}}a_n = a$, it follows that $A(a, b) = \lim_{n \in \mathbb{N}}A(a_n, b) \in I$, that is, $a \in s(b)$, which implies that $s(b)$ is a closed and thus a compact subset of $M$.}}% 
By the above, we can define a map $s : M \to \mathcal{K}(M)$ that assigns to each $b \in M$ its corresponding section $s(b) \in \mathcal{K}(M)$, where $\mathcal{K}(M)$ denotes the collection of all the compact subsets of $M$ equipped with the Hausdorff metric \victor{(see for details \cite{MR3194082})}. 
%(note that $s(b)$ is a compact subset of $M$)

From now on, we will assume that the map $A$ is such that $s$ results in a locally constant function, that is, for any $b \in M$, there is an open neighborhood $U_b \subset M$ containing $b$ such that $s(b') = s(b)$ for each $b' \in U_b$.

It is easy to check that $\mathcal{B}(A, I)$ is $\sigma$-invariant, moreover, in \cite{MR3194082} it was showed that $\mathcal{B}(A, I) \subset X$ is a closed metric subspace (when it is equipped with the metric induced by the metric $d$ defined in \eqref{metric}). Therefore, $\mathcal{B}(A, I)$ results in a topological subshift of $M^{\mathbb{N}}$. 

If $M$ is a connected compact metric space, for instance $M = [0, 1]$, it follows that $s : M \to \mathcal{K}(M)$ is a constant function, which reduce our approach \ra{ to the classical model on the lattice $[0, 1]^{\mathbb{N}}$ (see for instance \cite{MR2864625})}. In the case that the function $A$ is a constant map taking the value $\kappa$ and $I := \{ \kappa \} = A(M \times M)$, our setting is the same as the one studied in \cite{MR3377291}. On other hand, if $M = \{1, \ldots, d\}$, $I = \{1\}$, ${\bf A} \in M_{d \times d}(\{0, 1\})$ and the function $A$ is defined by $A(i, j) = 1$ if and only if ${\bf A}_{i, j} = 1$ and $A(i, j) \neq 1$ if and only if ${\bf A}_{i, j} = 0$, it follows that $\mathcal{B}(A, \{1\})$ is a finite Markov shift with incidence matrix ${\bf A}$ on the alphabet $\{1, \ldots, d\}$ (see \cite{MR3114331} and \cite{MR1085356}).

Let $Y$ be a subset of $X$. We know that $Y$ is a metric subspace of $X$, with the metric induced by the metric $d$ defined in \eqref{metric}. We will use the notation $\mathcal{C}_b(Y)$ for the set of bounded continuous functions from $Y$ into $\mathbb{R}$ equipped with the norm $\| \cdot \|_{\infty}$ given by $\|\varphi\|_{\infty} = \sup\{|\varphi(x)| : x \in Y\}$. 
When $Y$ is a compact metric subspace of $X$, we will denote by $\mathcal{C}(Y)$ the set of continuous functions from $Y$ into $\mathbb{R}$. We will also use the notation $\mathcal{H}_{\alpha}(Y)$ for the set of $\alpha$-H\"older continuous functions from $Y$ into $\mathbb{R}$ equipped with the norm $\| \cdot \|_{\alpha}$ given by $\|\varphi\|_{\alpha} = \|\varphi\|_{\infty} + \mathrm{Hol}_{\varphi}$, where 
\[
\mathrm{Hol}_{\varphi} := \sup\Bigl\{\frac{|\varphi(x) - \varphi(y)|}{d(x, y)^{\alpha}} : x \neq y ,\, x, y \in Y \Bigr\}.
\]

It is widely known that all the spaces of functions mentioned above result in Banach spaces when they are equipped with its corresponding norms.

Now we will define the Ruelle operator associated to a map $\varphi \in \mathcal{C}(\mathcal{B}(A, I))$ in our present setting. Fixing a Borelian a priori probability measure $\nu$ on $M$ and assuming that $\nu$ has full support, we define the {\bf generalized Ruelle operator} associated to $\varphi$ as the map assigning to each $\psi \in \mathcal{C}(\mathcal{B}(A, I))$ the function $\mathcal{L}_{\varphi}(\psi)$ given by
\begin{equation}
\label{Ruelle-operator}
\mathcal{L}_{\varphi}(\psi)(x) := \int_{s(x_1)}e^{\varphi(ax)}\psi(ax)d\nu(a) \,, 
\end{equation} 
where $ax \in \mathcal{B}(A, I)$ is the concatenation of the word $a \in s(x_1)$ and the sequence $x \in \mathcal{B}(A, I)$.

By the above, it follows that for each $n \in \mathbb{N}$, the $n$-th iterate of the Ruelle operator is given by the map assigning to each $\psi \in \mathcal{C}(\mathcal{B}(A, I))$ the function
\[
\mathcal{L}^n_{\varphi}(\psi)(x) = \int_{s(a_{n-1})} \ldots \int_{s(x_1)}e^{S_n\varphi(a^nx)}\psi(a^nx)d\nu(a_1) \ldots d\nu(a_n)\,, 
\] 
where $S_n \varphi(y) = \sum^{n-1}_{j=0}\varphi(\sigma^j(y))$ and each $a^n = a_n \ldots a_1$ is a word of length $n$ satisfying that the concatenation $a^nx \in \mathcal{B}(A, I)$, which is equivalent to say that $a_1 \in s(x_1), a_2 \in s(a_1), \ldots, a_n \in s(a_{n-1})$ and $x \in \mathcal{B}(A, I)$. 

In \cite{MR3194082} it was proved that this operator is well defined, that is, the integral in the right side of the above equation is finite for each $x \in \mathcal{B}(A, I)$. Furthermore, it is easy to check, using the fact that continuous functions are uniformly continuous on compact sets, that  $\mathcal{L}_{\varphi}$ preserves the set of functions $\mathcal{C}(\mathcal{B}(A, I))$.
	
In the case where the potential $\varphi$ belongs to $\mathcal{H}_{\alpha}(\mathcal{B}(A, I))$, we have that the set $\mathcal{H}_{\alpha}(\mathcal{B}(A, I))$ is preserved by the Ruelle operator $\mathcal{L}_{\varphi}$. 

Indeed, if we have $\psi \in \mathcal{H}_{\alpha}(\mathcal{B}(A, I))$, it follows that for any pair $x, y \in \mathcal{B}(A, I)$ such that $x_1 = y_1$, we have 
\begin{align}
\Bigl|\mathcal{L}_{\varphi}(\psi)(x) - \mathcal{L}_{\varphi}(\psi)(y) \Bigr|
&\leq \int_{s(x_1)} \Bigl| e^{\varphi(ax)}\psi(ax) - e^{\varphi(ay)}\psi(ay) \Bigr| d\nu(a) \nonumber \\
&\leq \frac{1}{2^{\alpha}}\Bigl(\mathrm{Hol}_{e^\varphi}\|\psi\|_{\infty} + \mathrm{Hol}_{\psi}e^{\|\varphi\|_{\infty}}\Bigr)d(x, y)^{\alpha} \nonumber \,.
\end{align}

Thus, under the assumption that the function $s$ is locally constant, we conclude that the function $\mathcal{L}_{\varphi}(\psi)$ is locally H\"older continuous, which implies that $\mathcal{L}_{\varphi}(\psi) \in \mathcal{H}_{\alpha}(\mathcal{B}(A, I))$ by compactness of the set $\mathcal{B}(A, I)$ (A more detailed explanation about this claim appears in the proof of Theorem \ref{RPF-theorem}). 

In order to simplify reading of this text, for any Borelian measure $\mu$ defined on a metric subspace $Y \subset X$ and any $\psi \in \mathcal{C}(Y)$, we will use the notation 
\[
\mu(\psi) := \int_{Y} \psi d\mu \,.
\]

When necessary, we will use the notation
\[
\mu(\psi(x)) := \int_{Y} \psi(x) d\mu(x) \,.
\]

We say that a Borelian measure $\mu$ defined on the metric subspace $Y \subset X$ is a {\bf$\sigma$-invariant measure}, if for any Borelian set $E \subset Y$ we have  $\mu(\sigma^{-1}(E)) = \mu(E)$. Through this paper, we will use the notation $\mathcal{M}_{\sigma}(Y)$ for the set of all the $\sigma$-invariant probability measures on $Y$.

From the properties of the dual of a Banach space, we can define the {\bf dual Ruelle operator} associated to a potential $\varphi \in \mathcal{H}_{\alpha}(\mathcal{B}(A, I))$, as the map $\mathcal{L}^*_{\varphi}$ assigning to each Radon measure $\mu$ on the Borelian sets of $\mathcal{B}(A, I)$, the Radon measure $\mathcal{L}^*_{\varphi}(\mu)$, which is defined as the Radon measure satisfying for each $\psi \in \mathcal{C}(\mathcal{B}(A, I))$ the following equation 
\begin{equation}
\label{dual-Ruelle-operator}
\mathcal{L}^*_{\varphi}(\mu)(\psi) := \mu(\mathcal{L}_{\varphi}(\psi)) \,. 
\end{equation}

In a similar way, for each $n \in \mathbb{N}$ we define the $n$-th iterate of $\mathcal{L}^*_{\varphi}$ as the operator assigning to each Radon measure $\mu$, the Radon measure $\mathcal{L}^{*,n}(\mu)$, satisfying for each $\psi \in \mathcal{C}(\mathcal{B}(A, I))$ the following equation 
\[
\mathcal{L}^{*,n}_{\varphi}(\mu)(\psi) = \mu(\mathcal{L}^n_{\varphi}(\psi)) \,. 
\]

Note that by completeness and separability of the metric space $\mathcal{B}(A, I)$, the operators $\mathcal{L}^*_{\varphi}$ and $\mathcal{L}^{*,n}_{\varphi}$ are in fact defined on the set of all the Borelian measures on $\mathcal{B}(A, I)$.

We say that a metric subspace $Y \subset X$ is {\bf topologically transitive}, if for any pair of open sets $U, V \subset Y$, there exists $n \in \mathbb{N}$ such that $\sigma^{-n}(U) \cap V \neq \emptyset$. In addition, we say that $Y \subset X$ is {\bf topologically mixing}, if for any pair of open sets $U, V \subset Y$, there exists $n \in \mathbb{N}$ such that $\sigma^{-m}(U) \cap V \neq \emptyset$ for each $m \geq n$.

Under the assumption that the set of admissible sequences $\mathcal{B}(A, I)$ is topologically transitive, we say that $\mathcal{B}(A, I)$ admits a {\bf spectral decomposition}, if there are $k \in \mathbb{N}$ and a permutation ${\bf p}$ of the set $\{1, ..., k\}$, such that 
\[
\mathcal{B}(A, I) = \mathcal{B}(A, I)_1 \cup ... \cup \mathcal{B}(A, I)_k \,,
\]
where the subsets $\mathcal{B}(A, I)_i$, with $i \in \{1, ..., k \}$, are pairwise disjoint and closed, $\sigma(\mathcal{B}(A, I)_i) = \mathcal{B}(A, I)_{{\bf p}(i)}$ and each component $\mathcal{B}(A, I)_i$ is topologically mixing for the map $\sigma^{m_i}$, where $m_i$ is the less positive integer such that ${\bf p}^{m_i}(i) = i$. \victor{Note that the transitivity condition guarantees that for each pair $i, j \in \{1, ..., k\}$, there is $n \in \mathbb{N}$ such that $\sigma^n(\mathcal{B}(A, I)_i) = \mathcal{B}(A, I)_j$ which implies that the permutation ${\bf p}$ is necessarily a cycle of length $k$ and, therefore, $m_i = k$ for each $i \in \{1, ..., k\}$..}

\begin{remark}
It is widely known that the any element of the class of finite Markov shifts with irreducible incidence matrix admits a spectral decomposition (see for instance section 1.3 in \cite{MR1484730}). Furthermore, even in non compact approaches this property is  also satisfied. For instance, any countable Markov shift with irreducible incidence matrix satisfies the property. \victor{The above} will be useful in the proof of Proposition \ref{RPF-theorem-SFT} (see for details section 7.1 in \cite{MR1484730}).
\end{remark}

\victor{Throughout the paper we will assume that $M$ is a compact metric space and that the set of admissible sequences $\mathcal{B}(A, I)$ is a topologically transitive set admitting a spectral decomposition. We will suppose also that the map $s$ is locally constant.}

Now we are able to state the main results of this paper. The first one of them is the following:

\begin{theorem}
\label{RPF-theorem}
%\footnote{\rafa{Victor, eu creio que a ultima frase possa ser substituida por: Suppose that $s$ is locally constant. Me parece claro que isso implica em uma condição sobre $A$, que vai se refletir em $s$. Creio que esta redação que proponho é mais simples.}}
For any potential $\varphi \in \mathcal{H}_{\alpha}(\mathcal{B}(A, I))$, the following conditions  are satisfied:
\begin{enumerate}
\item There are $\lambda_{\varphi} > 0$ and a strictly positive function $f_{\varphi} \in \mathcal{H}_{\alpha}(\mathcal{B}(A, I))$, such that, $\mathcal{L}_{\varphi}(f_{\varphi}) = \lambda_{\varphi}f_{\varphi}$. Moreover, the eigenvalue $\lambda_{\varphi}$ is simple and maximal. 
\item There exists a Borelian probability measure $\rho_{\varphi}$ defined on $\mathcal{B}(A, I)$, such that, $\mathcal{L}^*_{\varphi}(\rho_{\varphi}) = \lambda_{\varphi}\rho_{\varphi}$.
\item For $\overline{\varphi} = \varphi + \log(f_{\varphi}) - \log(f_{\varphi} \circ \sigma) - \log(\lambda_{\varphi})$, there is a unique fixed point $\mu_{\varphi}$ for the operator $\mathcal{L}^*_{\overline{\varphi}}$. Moreover, this fixed point is a $\sigma$-invariant probability measure and can be expressed as $d\mu_{\varphi} = f_{\varphi}d\rho_{\varphi}$, with $f_{\varphi}$ satisfying $(1)$ and $\rho_{\varphi}$ satisfying $(2)$.
\item If, in addition, the set of admissible sequences is topologically mixing, then, for any function $\psi \in \mathcal{H}_{\alpha}(\mathcal{B}(A, I))$, we have
\[
\lim_{n \to \infty}\mathcal{L}^n_{\overline{\varphi}}(\psi) = \mu_{\varphi}(\psi) \,, 
\] 
uniformly in the norm $\|\cdot\|_{\infty}$. Moreover, under these assumptions the maximal eigenvalue $\lambda_\varphi$ results isolated as well: the remainder of the spectrum is contained in a disk centered at zero with radius strictly smaller than $\lambda_\varphi$. 
\end{enumerate}
\end{theorem}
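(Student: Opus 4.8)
The plan is to follow the classical Ruelle--Perron--Frobenius strategy adapted to the a priori measure setting of \cite{MR2864625} and the spectral-decomposition reduction of \cite{MR1085356}, working first on a single topologically mixing component and then transferring the conclusions to $\mathcal{B}(A,I)$ via the cycle structure of the permutation ${\bf p}$. The locally constant hypothesis on $s$ is what makes all of this go through: it guarantees that $\mathcal{L}_\varphi$ maps $\mathcal{H}_\alpha(\mathcal{B}(A,I))$ into itself (as already sketched in the excerpt), so that one can work inside a fixed Hölder ball with good compactness via Arzelà--Ascoli.

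\textbf{Part (2): the eigenmeasure.} First I would produce $\rho_\varphi$ by a standard Schauder--Tychonoff fixed-point argument: the map
\[
\mu \longmapsto \frac{\mathcal{L}^*_\varphi(\mu)}{\mathcal{L}^*_\varphi(\mu)(1)} = \frac{\mathcal{L}^*_\varphi(\mu)}{\mu(\mathcal{L}_\varphi(1))}
\]
is continuous on the weak-$*$ compact convex set of Borel probability measures on $\mathcal{B}(A,I)$ (using compactness of $\mathcal{B}(A,I)$ and continuity of $\mathcal{L}_\varphi(1)$, which is bounded below by strict positivity coming from $\nu$ having full support and $s(b)$ being nonempty), hence has a fixed point $\rho_\varphi$ with $\mathcal{L}^*_\varphi(\rho_\varphi)=\lambda_\varphi\rho_\varphi$ where $\lambda_\varphi := \rho_\varphi(\mathcal{L}_\varphi(1))>0$.

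\textbf{Part (1): the eigenfunction.} Normalizing $\varphi$ is convenient: replacing $\varphi$ by $\varphi-\log\lambda_\varphi$ we may assume $\lambda_\varphi=1$, so $\mathcal{L}_\varphi^*\rho_\varphi=\rho_\varphi$. I would then consider the Cesàro averages $f_n := \frac1n\sum_{k=0}^{n-1}\mathcal{L}_\varphi^k(1)$. The key estimate is a uniform Hölder bound: iterating the inequality displayed in the excerpt (and using that $S_n\varphi$ has uniformly bounded oscillation on cylinders because $\varphi$ is Hölder and $s$ is locally constant, so distances contract by $2^{-1}$ at each step), one shows that $\{\mathcal{L}_\varphi^k(1)\}$ is equi-Hölder and uniformly bounded above and below away from $0$; hence so is $\{f_n\}$. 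By Arzelà--Ascoli extract a subsequential uniform limit $f_\varphi\in\mathcal{H}_\alpha(\mathcal{B}(A,I))$, strictly positive, and since $\mathcal{L}_\varphi(f_n)-f_n=\frac1n(\mathcal{L}_\varphi^n(1)-1)\to 0$ uniformly, the limit satisfies $\mathcal{L}_\varphi(f_\varphi)=f_\varphi$. \emph{Simplicity and maximality} of the eigenvalue: pass to the normalized operator $\mathcal{L}_{\overline\varphi}$ with $\overline\varphi=\varphi+\log f_\varphi-\log(f_\varphi\circ\sigma)$ (so $\mathcal{L}_{\overline\varphi}(1)=1$); on a topologically mixing component one proves a contraction/Doeblin-type estimate showing any two normalized densities are pulled together under iteration, whence the eigenspace is one-dimensional and any other eigenvalue has strictly smaller modulus. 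For maximality on all of $\mathcal{B}(A,I)$, if $\mathcal{L}_\varphi g=\beta g$ with $|\beta|\ge\lambda_\varphi$ then $|g|/f_\varphi$ is (sub)invariant for $\mathcal{L}_{\overline\varphi}$ up to the factor $|\beta|/\lambda_\varphi$, forcing $|\beta|=\lambda_\varphi$ and, via the spectral decomposition, pinning down the eigenvalues as $\lambda_\varphi$ times $k$-th roots of unity.

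\textbf{Part (3): the invariant Gibbs measure.} Set $\overline\varphi$ as above and $d\mu_\varphi := f_\varphi\,d\rho_\varphi$. A direct computation from \eqref{dual-Ruelle-operator} gives, for $\psi\in\mathcal{C}$,
\[
\mu_\varphi(\mathcal{L}_{\overline\varphi}(\psi)) = \rho_\varphi\bigl(f_\varphi\cdot\tfrac{1}{f_\varphi}\mathcal{L}_\varphi(f_\varphi\psi)\cdot\tfrac{1}{\lambda_\varphi}\cdot\tfrac{1}{f_\varphi\circ\sigma}\bigr)\cdot(\cdots)
\]
which simplifies, using $\mathcal{L}^*_\varphi\rho_\varphi=\lambda_\varphi\rho_\varphi$, to $\mu_\varphi(\psi)$; so $\mathcal{L}^*_{\overline\varphi}\mu_\varphi=\mu_\varphi$, and $\sigma$-invariance follows because $\mathcal{L}_{\overline\varphi}(\psi\circ\sigma)=\psi\,\mathcal{L}_{\overline\varphi}(1)=\psi$. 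Uniqueness of the fixed point for $\mathcal{L}^*_{\overline\varphi}$ follows from part (4) (on a mixing component) or from simplicity of the eigenvalue $1$ for $\mathcal{L}_{\overline\varphi}$ together with the spectral decomposition argument used for (1).

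\textbf{Part (4): spectral gap in the mixing case.} Assume now $\mathcal{B}(A,I)$ is topologically mixing, so $k=1$ and $\mathcal{L}_{\overline\varphi}(1)=1$. The heart is a uniform \emph{Birkhoff cone contraction}: work with the cone of Hölder functions $g$ with $g>0$ and $\mathrm{Hol}_{\log g}\le$ some constant, and show $\mathcal{L}_{\overline\varphi}^{N}$ (for $N$ large enough that the mixing gives full support of preimages) maps it strictly inside itself with finite Hilbert projective diameter; equivalently, prove directly the two-sided inequality $c\,\mu_\varphi(\psi)\le \mathcal{L}_{\overline\varphi}^N(\psi)\le C\,\mu_\varphi(\psi)$ for $\psi\ge 0$, $\psi\not\equiv 0$, with $0<c\le C$ independent of $\psi$. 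Iterating yields $\|\mathcal{L}_{\overline\varphi}^n(\psi)-\mu_\varphi(\psi)\|_\infty\le C\theta^n\|\psi\|_\alpha$ for some $\theta<1$, which is the stated uniform convergence; the same contraction on the Hölder norm shows the rest of the spectrum of $\mathcal{L}_{\overline\varphi}$ on $\mathcal{H}_\alpha$ lies in a disk of radius $\theta<1$, hence that of $\mathcal{L}_\varphi$ lies in a disk of radius $<\lambda_\varphi$, i.e. $\lambda_\varphi$ is isolated.

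\textbf{Main obstacle.} The technical crux is the uniform Hölder/distortion control: verifying that iterating the single-step estimate from the excerpt produces an \emph{equi-Hölder} family $\{\mathcal{L}_\varphi^n(1)\}$ with a \emph{uniform lower bound} bounded away from zero, and — in the mixing case — converting topological mixing of the abstract subshift $\mathcal{B}(A,I)$ (rather than a finite graph) into the quantitative two-sided inequality above. This is where the locally constant hypothesis on $s$ and the full support of the a priori measure $\nu$ must be used carefully: local constancy ensures cylinders of $\mathcal{B}(A,I)$ behave like those of a shift of finite type so that preimages under $\sigma^N$ cover $\mathcal{B}(A,I)$ with uniformly comparable $\nu$-mass once $N$ exceeds the mixing time, and compactness of $M$ makes the distortion constants uniform over the (infinitely many, but compactly parametrized) cylinders.
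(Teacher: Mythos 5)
Your proposal is correct in outline and would prove the theorem, but it takes a genuinely different route from the paper at the two key steps. For the eigenfunction in item (1), you build $f_\varphi$ as a subsequential limit of Ces\`aro averages of $\mathcal{L}^k_\varphi(1)$ after first producing $\rho_\varphi$ and normalizing by $\lambda_\varphi=\rho_\varphi(\mathcal{L}_\varphi(1))$; the paper instead never needs two-sided bounds on the iterates of $1$: it introduces the nonlinear operators $\mathcal{T}_{t,\varphi}(u)=\log\mathcal{L}_\varphi(e^{tu})$, which are uniform $t$-contractions on $\mathcal{C}(\mathcal{B}(A,I))$, obtains fixed points $u_t$ with $t$-independent H\"older constants, and passes to the limit $t\uparrow 1$ after subtracting $\max(u_t)$. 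For item (4), you propose a Birkhoff-cone/Doeblin contraction, i.e.\ a quantitative two-sided bound $c\,\mu_\varphi(\psi)\le\mathcal{L}^N_{\overline\varphi}(\psi)\le C\,\mu_\varphi(\psi)$; the paper uses a softer argument: equicontinuity and $\|\cdot\|_\alpha$-boundedness of $(\mathcal{L}^n_{\overline\varphi}(\psi))_n$ give an accumulation point $\widetilde\psi$ whose supremum is preserved by all iterates, a maximum-principle argument plus density of $\bigcup_n\sigma^{-n}(\{x^n\})$ (from mixing) forces $\widetilde\psi$ to be constant, and the isolation of $\lambda_\varphi$ is then extracted from Dini's theorem applied to the decreasing sequence $\|\mathcal{L}^n_{\overline\varphi}(\psi)\|_\infty$ on the compact set $S_V=\{\psi:\rho_\varphi(\psi)=0,\ \|\psi\|_\infty=1,\ \mathrm{Hol}_\psi\le 1\}$. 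Your route, if completed, buys an explicit exponential rate $\theta^n$ which the paper does not state; the paper's route buys economy, since it never has to convert topological mixing into the uniform covering/comparability estimate that you correctly flag as your main obstacle (for that step you would want to observe that local constancy of $s$ on compact $M$ forces $s$ to take only finitely many values, so the transition structure is governed by a finite graph and mixing does yield a uniform $N$; this is doable but is precisely the quantitative work the paper's soft argument sidesteps). The reduction of the transitive case to the mixing case via the spectral decomposition and the $k$-th root of the eigenvalue of $\mathcal{L}^k_\varphi$ is the same in both arguments, as is the verification of item (3).
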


%{\bf This part of the extension to the countable Markov shift could appear here...}

Now we propose an application of the former result in the context of countable Markov shifts. In order to do that, consider $M \subset [0, 1]$ a compact set of the form $M := \{b_k : k \in \mathbb{N}\} \cup \{b_{\infty}\}$, where $b_k < b_{k+1}$ for each $k \in \mathbb{N}$. Suppose, also, that $b_{\infty} := 1$ is the unique accumulation point of $M$. It is easy to check that $M$ equipped with the metric $d_M(b_i, b_j) = |b_i - b_j|$ results in a compact metric space.

Thus, choosing $I = \{1\}$, $M_0 := \{b_k : k \in \mathbb{N}\}$ and defining the infinite matrix ${\bf A} \in M_{M_0 \times M_0}(\{0, 1\})$ as ${\bf A}_{b_i, b_j} = 1$, if and only if $A(b_i, b_j) = 1$ and ${\bf A}_{b_i, b_j} = 0$, if and only if $A(b_i, b_j) \neq 1$, we obtain that
\[
\mathcal{B}(A, \{1\}) \cap M_0^{\mathbb{N}} = \Sigma_{\bf A},
\]
where $\Sigma_{\bf A}$ is the {\bf countable Markov shift} on the alphabet $M_0$ with incidence matrix \victor{${\bf A}$, that is,} the set of sequences $x = (x_n)_{n \in \mathbb{N}}$ taking values on the alphabet $M_0$ such that ${\bf A}_{x_n, x_{n+1}} = 1$ for each $n \in \mathbb{N}$. Note that in this case the subshift $\Sigma_{\bf A}$ is a bounded metric space when it is equipped with the metric induced by \eqref{metric}, which in this case is given by
\[
d(x, y) = \sum_{n = 1}^{\infty}\frac{1}{2^n}|x_n - y_n| \,.
\]

Furthermore, since the map $s : M \to \mathcal{K}(M)$ is locally constant, there is $j_0 \in \mathbb{N}$ such that for any $j \geq j_0$ we have ${\bf A}_{b_i, b_j} = {\bf A}_{b_i, b_{j_0}}$ for all $i \in \mathbb{N}$. Note that this class of countable Markov shifts is not immersed in the class of countable Markov shifts satisfying either the BIP property or the finitely primitive condition (see \cite{MR2151222}, \cite{MR2800665} and \cite{MR1955261}).

In fact, we have that ${\bf A}_{b_i, b_j} = ({\bf 1}_I \circ A)(b_i, b_j)$ for any pair $i, j \in \mathbb{N}$, with ${\bf 1}_I$ the characteristic function of the set $I$, that is, the map satisfying ${\bf 1}_I(x) = 1$ if $x \in I$ and ${\bf 1}_I(x) = 0$ if $x \notin I$.

Under these assumptions, we obtain an a priori probability measure given by $\nu = \sum_{k = 1}^{\infty}p_k \delta_{b_k}$, with $\sum_{k = 1}^{\infty}p_k = 1$ and $p_k > 0$ for each $k \in \mathbb{N}$. Note that $b_{\infty} \in \mathrm{supp}(\nu)$, which assures that $\nu$ has full support on the set $M$. Moreover, if we define $p' : M \to [0, 1]$ as $p'(b_k) = p_k$ for each $k \in \mathbb{N}$, $p'(b_{\infty}) = 0$ and $\pi_1 : X \to M$ as $\pi_1(x) = x_1$, we obtain that for each pair $\varphi, \psi \in \mathcal{B}(A, \{1\})$ and any $x \in \Sigma_{\bf A}$, the equation \eqref{Ruelle-operator} can be written as 
\[ 
\mathcal{L}_{\varphi}(\psi)(x) = \sum_{a \in s(x_1)} e^{\varphi(ax)}\psi(ax)(p' \circ \pi_1)(ax) \,. 
\]
In particular, taking $p = p'|_{\Sigma_{\bf A}}$ and using that the map $\psi$ is bounded, it follows that for each $x \in \Sigma_{\bf A}$ we have
\[
\mathcal{L}_{\varphi}(\psi)(x) = L_{\varphi|_{\Sigma_{\bf A}} + \log(p \circ \pi_1)}(\psi|_{\Sigma_{\bf A}}) (x) \,,
\]
where $L_{\phi}$ is the {\bf classical Ruelle operator} associated to $\phi \in \mathcal{C}(\Sigma_{\bf A})$ (which is defined without using an a priori probability measure \victor{but assuming suitable conditions on the potential}), given by the map assigning to each $\xi \in \mathcal{C}(\Sigma_{\bf A})$ the function $L_{\phi}(\xi)$ defined as
\begin{equation}
\label{Ruelle-operator-SFT}
L_{\phi}(\xi)(x) := \sum_{\substack{a \in M_0 \\ {\bf A}_{a, x_1} = 1}} e^{\phi(ax)}\xi(ax) \,,
\end{equation}
for each $x \in \Sigma_{\bf A}$ (see for instance \cite{MR1738951}). Note that the sum  in the right side of \eqref{Ruelle-operator-SFT} could fail to be finite. However,  in Proposition \ref{RPF-theorem-SFT} we will give conditions on the potential $\phi$ in order to guarantee finiteness of that sum for any $\psi \in \mathcal{H}_{\alpha}(\Sigma_{\bf A})$ and thus guarantee that $L_{\phi}$ is well defined in our setting.

Under that assumptions, it follows that \eqref{dual-Ruelle-operator} holds for any Borelian measure $\mu$ defined on $\Sigma_{\bf A}$ and any $\psi \in \mathcal{H}_{\alpha}(\Sigma_{\bf A})$.

A matrix ${\bf A} \in {\bf M}_{M_0 \times M_0}(\{0, 1\})$ is called irreducible, if for any pair $(b_i, b_j)$ belonging to the set $M_0 \times M_0$, there is $k \in \mathbb{N}$ such that the component in the row $b_i$ and the column $b_j$ of the matrix ${\bf A}^k$ is positive. On other hand, a matrix ${\bf A} \in {\bf M}_{M_0 \times M_0}(\{0, 1\})$ is called aperiodic, if there is $k \in \mathbb{N}$ such that all the components of the matrix ${\bf A}^k$ are positive. Note that $\mathcal{B}(A, \{1\})$ is topologically transitive if and only if the matrix ${\bf A}$ is irreducible and the set $\mathcal{B}(A, \{1\})$ is topologically mixing if and only if ${\bf A}$ results in an aperiodic matrix.

\victor{Throughout the paper we will assume that $\Sigma_{\bf A}$ is a countable Markov shift on the alphabet $M_0 = \{b_k : k \in \mathbb{N}\}$ with irreducible incidence matrix ${\bf A}$ and that there is $j_0 \in \mathbb{N}$ such that for any $j \geq j_0$ we have ${\bf A}_{b_i, b_j} = {\bf A}_{b_i, b_{j_0}}$ for all $i \in \mathbb{N}$.

This approach allows} to state a Ruelle's Perron-Frobenius Theorem in the context of countable Markov shifts in the following way:

\begin{proposition}
\label{RPF-theorem-SFT}
\victor{Consider} a potential $\phi : \Sigma_{\bf A} \to \mathbb{R}$ of the form $\phi = \varphi + \log(p \circ \pi_1)$, with $\varphi \in \mathcal{H}_{\alpha}(\Sigma_{\bf A})$ and $p : M_0 \to [0, 1]$ satisfying $p(b_k) > 0$ for each $k \in \mathbb{N}$ and $\sum_{k = 1}^{\infty}p(b_k) = 1$. Then, the Ruelle operator $L_{\phi}$ is well defined and Theorem \ref{RPF-theorem} holds in the following way:
\begin{enumerate}
\item There are $\lambda_{\phi} > 0$ and a strictly positive function $f_{\phi} \in \mathcal{H}_{\alpha}(\Sigma_{\bf A})$ such that $L_{\phi}(f_{\phi}) = \lambda_{\phi}f_{\phi}$. Moreover, the eigenvalue $\lambda_{\phi}$ is simple and maximal.
\item There exists a Borelian probability measure $\rho_{\phi}$ defined on $\Sigma_{\bf A}$ and satisfying $L^*_{\phi}(\rho_{\phi}) = \lambda_{\phi}\rho_{\phi}$.
\item For $\overline{\phi} = \phi + \log(f_{\phi}) - \log(f_{\phi} \circ \sigma) - \log(\lambda_{\phi})$, there is a unique fixed point $\mu_{\phi}$ for the operator $L^*_{\overline{\phi}}$. Moreover, this fixed point is a $\sigma$-invariant probability measure and can be expressed of the form $d\mu_{\phi} = f_{\phi}d\rho_{\phi}$, with $f_{\phi}$ satisfying $(1)$ and $\rho_{\phi}$ satisfying $(2)$.
\item If the incidence matrix ${\bf A}$ is aperiodic, then, for any function $\psi \in \mathcal{H}_{\alpha}(\Sigma_{\bf A})$, we have
\[
\lim_{n \to \infty}L^n_{\overline{\phi}}(\psi) = \mu_{\phi}(\psi) \,, 
\] 
uniformly in the norm $\|\cdot\|_{\infty}$. Furthermore, in this case the eigenvalue $\lambda_\phi$ results isolated. That is, the remainder of the spectrum is contained in a disk centered at zero with radius strictly smaller than $\lambda_\varphi$.
\end{enumerate}
\end{proposition}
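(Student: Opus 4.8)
The plan is to deduce the statement from Theorem \ref{RPF-theorem} by transferring all the structure back and forth between the ambient subshift $\mathcal{B}(A,\{1\})$ with the a priori measure $\nu = \sum_k p(b_k)\delta_{b_k}$ (completed by $p(b_\infty)=0$) and the countable Markov shift $\Sigma_{\bf A}$. First I would record, as already observed in the excerpt, the pointwise identity $\mathcal{L}_{\varphi}(\psi)(x) = L_{\phi}(\psi|_{\Sigma_{\bf A}})(x)$ for $x\in\Sigma_{\bf A}$, where $\phi=\varphi+\log(p\circ\pi_1)$; here one must first check that $L_\phi$ is well defined, i.e. that the series in \eqref{Ruelle-operator-SFT} converges for $\psi\in\mathcal{H}_\alpha(\Sigma_{\bf A})$. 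This follows because $\sum_{a\in M_0}e^{\phi(ax)}|\psi(ax)| \le \|\psi\|_\infty e^{\|\varphi\|_\infty}\sum_k p(b_k) = \|\psi\|_\infty e^{\|\varphi\|_\infty} < \infty$, since $\varphi$ is bounded on the compact space $\mathcal{B}(A,\{1\})$ and $p$ is summable. The same estimate shows $L_\phi$ maps bounded functions to bounded functions and, by the Hölder computation already displayed in the excerpt applied to $\mathcal{L}_\varphi$ (which used only local constancy of $s$ and the Hölder norm of $\varphi$), that $L_\phi$ preserves $\mathcal{H}_\alpha(\Sigma_{\bf A})$.

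Next I would transfer the conclusions of Theorem \ref{RPF-theorem}. Since $\mathcal{B}(A,\{1\})$ is topologically transitive (equivalently ${\bf A}$ irreducible) and, by the Remark in the excerpt, admits a spectral decomposition, and since $s$ is locally constant, Theorem \ref{RPF-theorem} provides $\lambda_\varphi>0$, a strictly positive $f_\varphi\in\mathcal{H}_\alpha(\mathcal{B}(A,\{1\}))$ with $\mathcal{L}_\varphi(f_\varphi)=\lambda_\varphi f_\varphi$ (with $\lambda_\varphi$ simple and maximal), an eigenmeasure $\rho_\varphi$ with $\mathcal{L}^*_\varphi(\rho_\varphi)=\lambda_\varphi\rho_\varphi$, and the invariant measure $\mu_\varphi = f_\varphi\,d\rho_\varphi$. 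I then set $\lambda_\phi:=\lambda_\varphi$, $f_\phi:=f_\varphi|_{\Sigma_{\bf A}}$, $\rho_\phi:=\rho_\varphi|_{\Sigma_{\bf A}}$, $\mu_\phi:=\mu_\varphi|_{\Sigma_{\bf A}}$. The eigenfunction equation for $f_\phi$ is immediate from the pointwise identity above. For the eigenmeasure, one uses \eqref{dual-Ruelle-operator}: for $\psi\in\mathcal{H}_\alpha(\Sigma_{\bf A})$, $L^*_\phi(\rho_\phi)(\psi)=\rho_\phi(L_\phi(\psi))=\rho_\varphi(\mathcal{L}_\varphi(\tilde\psi))=\lambda_\varphi\rho_\varphi(\tilde\psi)=\lambda_\phi\rho_\phi(\psi)$, where $\tilde\psi$ is any Hölder extension of $\psi$ — note this requires that $\rho_\varphi$ and $\mu_\varphi$ are carried by $\Sigma_{\bf A}$, i.e. give zero mass to sequences hitting $b_\infty$; this holds because $\nu(\{b_\infty\})=0$ forces $\rho_\varphi$ (being $\lambda_\varphi^{-1}\mathcal{L}^*_\varphi\rho_\varphi$) to assign zero mass to any cylinder $[b_\infty]$, and iterating, to the set of sequences with any coordinate equal to $b_\infty$. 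Simplicity and maximality of $\lambda_\phi$, and the normalization $\mu_\phi = f_\phi\,d\rho_\phi$ together with its $\sigma$-invariance and uniqueness as a fixed point of $L^*_{\overline\phi}$, all transfer verbatim, observing that $\overline\phi = \phi + \log f_\phi - \log(f_\phi\circ\sigma) - \log\lambda_\phi$ equals the restriction of $\overline\varphi$ to $\Sigma_{\bf A}$, so $L_{\overline\phi}$ is the restriction of $\mathcal{L}_{\overline\varphi}$.

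For item (4), if ${\bf A}$ is aperiodic then $\mathcal{B}(A,\{1\})$ is topologically mixing, so part (4) of Theorem \ref{RPF-theorem} gives $\mathcal{L}^n_{\overline\varphi}(\tilde\psi)\to\mu_\varphi(\tilde\psi)$ uniformly, with the spectral gap; restricting to $\Sigma_{\bf A}$ and using $L^n_{\overline\phi}(\psi)=\mathcal{L}^n_{\overline\varphi}(\tilde\psi)|_{\Sigma_{\bf A}}$ and $\mu_\varphi(\tilde\psi)=\mu_\phi(\psi)$ yields the claim. The main obstacle I anticipate is the well-definedness and function-space preservation of $L_\phi$ on the noncompact space $\Sigma_{\bf A}$ — specifically, checking that the Hölder seminorm of $L_\phi(\psi)$ is controlled uniformly despite the infinite sum, which is where the hypothesis that $s$ is locally constant (equivalently, the stabilization ${\bf A}_{b_i,b_j}={\bf A}_{b_i,b_{j_0}}$ for $j\ge j_0$) is essential: it guarantees that for $x,y$ in a common cylinder the two series in $L_\phi(\psi)(x)$ and $L_\phi(\psi)(y)$ are summed over the \emph{same} index set $s(x_1)=s(y_1)$, so the difference can be estimated term by term exactly as in the compact Hölder estimate displayed in the excerpt, and then one concludes locally Hölder $\Rightarrow$ globally Hölder by the same compactness argument used there for $\mathcal{B}(A,I)$.
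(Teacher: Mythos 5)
Your proposal is correct and follows essentially the same route as the paper: both transfer Theorem \ref{RPF-theorem} from the compactification $\mathcal{B}(A,\{1\})$ to $\Sigma_{\bf A}$ via the identification of $L_\phi$ with $\mathcal{L}_{\varphi'}$ through the (unique) H\"older extension of functions from the dense subset $\Sigma_{\bf A}$, which the paper packages as an isometric isomorphism $i$ satisfying $i\circ L_\phi=\mathcal{L}_{\varphi'}\circ i$. The only notable variation is your treatment of the eigenmeasure --- you restrict $\rho_\varphi$ and show directly that it gives zero mass to sequences hitting $b_\infty$ (since $\nu(\{b_\infty\})=0$ and $\mathcal{L}^*_\varphi\rho_\varphi=\lambda_\varphi\rho_\varphi$), whereas the paper defines $\rho_\phi$ as the pullback $\rho_{\phi'}\circ i$; your argument in fact supplies a detail the paper leaves implicit when asserting that this functional is a Borel probability measure on the non-compact space $\Sigma_{\bf A}$.
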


On other hand, given a potential $\varphi \in \mathcal{C}(Y)$, with $Y \subset X$ a metric subspace, we say that a probability measure $\mu_{\infty} \in \mathcal{M}_{\sigma}(Y)$ is a {\bf$\varphi$-maximizing measure}, if satisfies 
\[
\mu_{\infty}(\varphi) = m(\varphi) := \sup\{\mu(\varphi) : \mu \in \mathcal{M}_{\sigma}(Y)\} \,.
\]

Hereafter, we will denote by $\mathcal{M}_{\max}(\varphi)$ to the set of all the $\varphi$-maximizing probability measures, which is a non-empty set when $Y \subset X$ is a compact metric space. In section \ref{theory-PF-section} will be proved a variational principle of the pressure for the equilibrium states obtained from Theorem \ref{RPF-theorem}, which implies that the accumulation points of the family of Gibbs states $(\mu_{t\varphi})_{t > 1}$ are in fact $\varphi$-maximizing probability measures. 

\victor{The above allows} to state the following result about existence of maximizing probability measures in the context of countable Markov shifts satisfying the conditions that appear in Proposition \ref{RPF-theorem-SFT}, using techniques of selection and non-selection at zero temperature. Note that this result is stated in an approach that is different to the ones that appear in \cite{MR3864383}, \cite{MR2293630} and \cite{MR2151222}, where, either are assumed another combinatorial conditions on the countable Markov shift $\Sigma_{\bf A}$ or are required another conditions on the regularity of the potential that represents the interactions on the system.

\begin{proposition}
\label{ground-states}
\victor{For each $t > 1$ consider the potential $\phi_t : \Sigma_{\bf A} \to \mathbb{R}$ given by} $\phi_t = t\varphi + \log(p \circ \pi_1)$, with $\varphi \in \mathcal{H}_{\alpha}(\Sigma_{\bf A})$ and $p : M_0 \to [0, 1]$, such that $p(b_k) > 0$ for each $k \in \mathbb{N}$ and $\sum_{k = 1}^{\infty}p(b_k) = 1$. Then, the family of equilibrium states $(\mu_{\phi_t})_{t > 1}$ has an accumulation point $\mu_{\infty}$ at infinity and $\mu_{\infty} \in \mathcal{M}_{\max}(\varphi)$. 
\end{proposition}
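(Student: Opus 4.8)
The plan is to combine the existence of equilibrium states coming from Proposition \ref{RPF-theorem-SFT} with a compactness argument at the level of $\sigma$-invariant measures, together with the variational principle announced in section \ref{theory-PF-section}. First I would apply Proposition \ref{RPF-theorem-SFT} to each potential $\phi_t = t\varphi + \log(p\circ\pi_1)$ with $t > 1$: since $\varphi \in \mathcal{H}_\alpha(\Sigma_{\bf A})$, so is $t\varphi$, and hence there exists an equilibrium state $\mu_{\phi_t}$ (the unique fixed point of $L^*_{\overline{\phi_t}}$, of the form $d\mu_{\phi_t} = f_{\phi_t}\,d\rho_{\phi_t}$). The key point, to be extracted from the variational principle, is that $\mu_{\phi_t}$ maximizes $h_\sigma(\mu) + t\,\mu(\varphi)$ (with an appropriate entropy/pressure functional adapted to the a priori measure $\nu$); equivalently, writing $P(t) := \sup_{\mu \in \mathcal{M}_\sigma(\Sigma_{\bf A})}\{h_\sigma(\mu) + t\,\mu(\varphi)\} = \log\lambda_{\phi_t}$, one has $P(t) = h_\sigma(\mu_{\phi_t}) + t\,\mu_{\phi_t}(\varphi)$.

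Next I would establish tightness of the family $(\mu_{\phi_t})_{t > 1}$ as $t \to \infty$. This is where the structural hypothesis on ${\bf A}$ enters: because $s$ is locally constant there is $j_0$ with ${\bf A}_{b_i,b_j} = {\bf A}_{b_i,b_{j_0}}$ for all $j \ge j_0$ and all $i$, so the "tail" of the alphabet behaves uniformly; combined with the fact that $M = \{b_k\} \cup \{b_\infty\}$ is compact, the cylinder sets near $b_\infty$ can be controlled. Concretely, I would estimate $\mu_{\phi_t}([b_k])$ for large $k$ using $d\mu_{\phi_t} = f_{\phi_t}\,d\rho_{\phi_t}$, the eigenmeasure equation $L^*_{\phi_t}(\rho_{\phi_t}) = \lambda_{\phi_t}\rho_{\phi_t}$, and the fact that $\sum_k p(b_k) = 1$ forces $p(b_k) \to 0$; boundedness of $\varphi$ and Hölder control of $f_{\phi_t}$ should yield a bound on $\mu_{\phi_t}(\bigcup_{k \ge N}[b_k])$ that is small uniformly in $t$ for $N$ large. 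Since $\mathcal{M}_\sigma(\Sigma_{\bf A})$ with this tightness is sequentially compact in the weak-$*$ topology, we obtain an accumulation point $\mu_\infty \in \mathcal{M}_\sigma(\Sigma_{\bf A})$ along some sequence $t_n \to \infty$, and $\mu_\infty$ is genuinely a probability measure (no mass escapes).

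Finally I would identify $\mu_\infty$ as $\varphi$-maximizing by the standard zero-temperature argument. From $P(t) = h_\sigma(\mu_{\phi_t}) + t\,\mu_{\phi_t}(\varphi) \ge h_\sigma(\mu) + t\,\mu(\varphi)$ for every $\mu \in \mathcal{M}_\sigma(\Sigma_{\bf A})$, choosing $\mu$ with $\mu(\varphi)$ close to $m(\varphi)$ and dividing by $t$ gives $\limsup_{t\to\infty} \mu_{\phi_t}(\varphi) \ge m(\varphi) - h_\sigma(\mu)/t \to m(\varphi)$; combined with the trivial bound $\mu_{\phi_t}(\varphi) \le m(\varphi)$ we get $\mu_{\phi_t}(\varphi) \to m(\varphi)$. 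Then upper semicontinuity of $\mu \mapsto \mu(\varphi)$ along the convergent subsequence — which is where I must be careful, since $\varphi$ is only bounded Hölder on a non-compact shift and $\mu_{\phi_{t_n}} \to \mu_\infty$ weakly-$*$, so I would use the tightness from the previous step to upgrade weak-$*$ convergence to convergence of $\mu_{\phi_{t_n}}(\varphi)$ — yields $\mu_\infty(\varphi) = \lim_n \mu_{\phi_{t_n}}(\varphi) = m(\varphi)$, i.e. $\mu_\infty \in \mathcal{M}_{\max}(\varphi)$.

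The main obstacle I anticipate is the tightness estimate in the second paragraph: controlling $\mu_{\phi_t}(\bigcup_{k\ge N}[b_k])$ uniformly in $t$ requires quantitative control of both the eigenfunction $f_{\phi_t}$ and the eigenmeasure $\rho_{\phi_t}$ near the accumulation point $b_\infty$ of the alphabet, and it is precisely here that the BP-type hypothesis ${\bf A}_{b_i,b_j} = {\bf A}_{b_i,b_{j_0}}$ for $j \ge j_0$ must be exploited, in the spirit of the techniques of \cite{1LoVa19}; without it, mass could in principle escape to infinity as $t \to \infty$ and no $\varphi$-maximizing probability measure would be produced.
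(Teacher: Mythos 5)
Your overall strategy (existence of equilibrium states from Proposition \ref{RPF-theorem-SFT}, a compactness/tightness argument, then the standard zero-temperature limit via the variational principle) is reasonable, but it contains a genuine gap exactly where you flag the ``main obstacle'': the uniform-in-$t$ tightness estimate for $\mu_{\phi_t}\bigl(\bigcup_{k\ge N}[b_k]\bigr)$ is never actually proved, and it is not a routine computation. The naive bound obtained from $d\mu_{\phi_t}=f_{\phi_t}\,d\rho_{\phi_t}$ and the eigenmeasure equation produces factors of the form $e^{t\sup\varphi}/\lambda_{\phi_t}$, which can grow like $e^{t(\sup\varphi-\inf\varphi)}$ as $t\to\infty$; so ``boundedness of $\varphi$ and H\"older control of $f_{\phi_t}$'' do not by themselves give a bound that is small uniformly in $t$. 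Since preventing escape of mass at zero temperature is precisely the content of the proposition, leaving this step as an expectation means the proof is incomplete. A secondary issue: the variational principle you invoke is established in the paper only on the compact subshift $\mathcal{B}(A,I)$ (Lemma \ref{variational-principle}), not directly on the non-compact $\Sigma_{\bf A}$, so using $P(t)=\sup_{\mu\in\mathcal{M}_\sigma(\Sigma_{\bf A})}\{h(\mu)+t\mu(\varphi)\}$ on $\Sigma_{\bf A}$ itself would require an additional justification you do not supply.

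The paper sidesteps both problems by never working on the non-compact shift at all. It uses the isometric isomorphism $i:\mathcal{H}_\alpha(\Sigma_{\bf A})\to\mathcal{H}_\alpha(\mathcal{B}(A,\{1\}))$ from the proof of Proposition \ref{RPF-theorem-SFT} to identify $\mu_{\phi_t}(\psi)=\mu_{\phi_t'}(\psi')$ with $\phi_t'=t\varphi'+\log(p'\circ\pi_1)$ a potential on the \emph{compact} subshift $\mathcal{B}(A,\{1\})$. There, accumulation points of $(\mu_{\phi_t'})_{t>1}$ exist for free by compactness, Lemma \ref{variational-principle} together with $\sup_\mu h(\mu)\le 0$ shows any such limit $\mu_\infty'$ is $\varphi'$-maximizing, and $\mu_\infty:=\mu_\infty'\circ i$ is then a probability measure on $\Sigma_{\bf A}$ which dominates $\mu(\varphi)$ for every invariant $\mu$ supported in $\Sigma_{\bf A}$. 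If you want to salvage your direct approach, you would need to either carry out the tightness estimate honestly (which appears to require more than the ingredients you list) or, more economically, route the argument through the compact extension as the paper does.
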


Now we move our attention to an interesting setting in thermodynamical formalism: the study of bilateral topological subshifts. The Ruelle operator rely on the fact that the inverse image of any point is composed by several other points, and only can be defined because the shift map is not injective. This is no longer true in the case of bilateral topological subshifts, and therefore  the Ruelle operator can not be defined in these cases. However, the Livsic's Theorem and the use of involution kernels arise as tools to find maximizing measures in these approaches. Below we will show some results in this direction. \rafa{ In particular we will obtain an  expression for the normalized eigenfunction of the Ruelle operator associated to the maximal eigenvalue } \victor{(normalized in the sense that its integral with respect to the eigenprobability is equal to $1$)}, in terms of the eigenprobability of its corresponding dual.

Let $M$ be a compact metric space. Define the set 
\[
\mathcal{B}(A, I)^* := \{(\ldots, y_2, y_1) :\, y_i \in M,\, A(y_{i+1}, y_i) \in I,\, \forall i \in \mathbb{N}\} \,,
\]
with the map $\sigma^* : \mathcal{B}(A, I)^* \to \mathcal{B}(A, I)^*$ given by $\sigma^*((\ldots, y_2, y_1)) = (\ldots, y_3, y_2)$ acting on it. We call $\mathcal{B}(A, I)^*$ the {\bf transpose topological subshift} of $\mathcal{B}(A, I)$. 

Now we can define a bilateral topological subshift associated to $A$ and $I$ through an auxiliary function $\pi_{1, 1} : \mathcal{B}(A, I)^* \times \mathcal{B}(A, I) \to M \times M$ given by the equation $\pi_{1,1}(y, x) = (y_1, x_1)$. Define the set $\widehat{\mathcal{B}(A, I)}$ in the following way:
\[
\widehat{\mathcal{B}(A, I)} := \{(y, x) \in \mathcal{B}(A, I)^* \times \mathcal{B}(A, I) : (A \circ \pi_{1,1})(y, x) \in I\} \,.
\] 

In general, the sets $\widehat{\mathcal{B}(A, I)}$ and $\mathcal{B}(A, I)^* \times \mathcal{B}(A, I)$ don't agree, thus, we will use the notation $(y|x) = (\ldots, y_2, y_1 | x_1, x_2, \ldots)$ for the pairs $(y, x) \in \widehat{\mathcal{B}(A, I)}$. The {\bf bilateral shift map} $\widehat{\sigma} : \mathcal{B}(A, I)^* \times \mathcal{B}(A, I) \to \mathcal{B}(A, I)^* \times \mathcal{B}(A, I)$ is given by $\widehat{\sigma}(y, x) = (\tau^*_x(y), \sigma(x))$, where $\tau^*_x(y) = (\ldots, y_2, y_1, x_1) \in \mathcal{B}(A, I)^*$. \victor{An easy calculation allows} to check that $\widehat{\sigma}$ is invertible, with inverse satisfying the equation $\widehat{\sigma}^{-1}(y, x) = (\sigma^*(y), \tau_y(x))$, where $\tau_y(x) = (y_1, x_1, x_2, \ldots) \in \mathcal{B}(A, I)$. 

It is not difficult to check that $\widehat{\mathcal{B}(A, I)}$ results in a compact $\widehat{\sigma}$-invariant metric space, which implies that it is a bilateral topological subshift whose definition only depends of the function $A$ and the set $I$. Moreover, if the set $\mathcal{B}(A, I)$ is topologically transitive (resp. topologically mixing), we obtain that the sets $\mathcal{B}(A, I)^*$ and $\widehat{\mathcal{B}(A, I)}$ are also topologically transitive (resp. topologically mixing).

Now we will introduce the definition of \victor{involution kernel} associated to a potential $\varphi \in \mathcal{C}(\mathcal{B}(A, I))$. We say that a function $W : \widehat{\mathcal{B}(A, I)} \to \mathbb{R}$ is an {\bf involution kernel} associated to the potential $\varphi \in \mathcal{C}(\mathcal{B}(A, I))$, if the function $\widehat{\varphi} : \widehat{\mathcal{B}(A, I)} \to \mathbb{R}$ defined by $\widehat{\varphi}(y|x) := \varphi(x)$ for any $(y|x) \in \widehat{\mathcal{B}(A, I)}$ and the potential $\widehat{\varphi}^*$ defined by
\begin{equation}
\label{dual-potential}
\widehat{\varphi}^* := \widehat{\varphi} \circ \widehat{\sigma}^{-1} + W \circ \widehat{\sigma}^{-1} - W \,,
\end{equation} 
are such that $\widehat{\varphi}^*(y|x)$ does not depend on $x$, for any 
 $(y|x) \in \widehat{\mathcal{B}(A, I)}$. We will call $\varphi^*(y) :=\widehat{\varphi}^*(y|x)$  %for any $(y|x) \in \widehat{\mathcal{B}(A, I)}$.}
%The potential $\varphi^*$ is called 
the {\bf dual potential} of $\varphi$. \ra{ Some results about the behavior of the involution kernel in the settings of finite Markov shifts or when the alphabet is given by $S^1$}
can be found in \cite{MR2210682, CiLo17}.

%\victor{mudei o ordem deste paragrafo, aparecia um pouco antes do enunciado do Teorema 2, mas é conveniente explicar que o potencial transposto de um Hölde é Hölder antes de definir o operador}
Define $\tau_{y, n}(x) = (y_n, \ldots, y_1, x_1, \ldots)$, fixing $x' \in \mathcal{B}(A, I)$ such that $x'_1 = x_1$, an easy calculation shows that if $\varphi$ is a H\"older continuous function, the map $W : \widehat{\mathcal{B}(A, I)} \to \mathbb{R}$ given by   
\begin{equation}
\label{involution-kernel-example}
W_{\varphi}(y|x) = \sum_{n = 1}^{\infty} \varphi(\tau_{y, n}(x)) - \varphi(\tau_{y, n}(x')) \,,
\end{equation} 
is an involution kernel, $W \in \mathcal{H}_{\alpha}(\widehat{\mathcal{B}(A, I)})$ and $\varphi^* \in \mathcal{H}_{\alpha}(\mathcal{B}(A, I)^*)$.

Assuming that $\varphi \in \mathcal{H}_{\alpha}(\mathcal{B}(A, I))$, by \eqref{involution-kernel-example}, we can consider $\varphi^*$ as a function belonging to $\mathcal{H}_{\alpha}(\mathcal{B}(A, I)^*)$. Therefore, we can define the Ruelle operator associated to $\varphi^*$ as the map that assigns to each $\psi^* \in \mathcal{H}_{\alpha}(\mathcal{B}(A, I)^*)$ the function
\[
\mathcal{L}_{\varphi^*}(\psi^*)(y) := \int_{s^*(y_1)}e^{\varphi^*(ya)}\psi^*(ya) d\nu(a),
\]
where $s^*(b)$ is defined as the set of elements $a \in M$ such that $A(b, a) \in I$ and $ya$ is the concatenation of the sequence $y \in \mathcal{B}(A, I)^*$ and the word $a \in s^*(y_1)$.

An equivalent expression to \eqref{dual-potential} which will be used later is the following: for any $a \in M$, $x$ and $y$, such that, $(y|ax)\in  \widehat{\mathcal{B}(A, I)} $, we have 
\begin{equation}
\label{involution-kernel}
(\widehat{\varphi}^* + W)(ya|x) = (\widehat{\varphi} + W)(y|ax) \,.
\end{equation} 

\rafa{The following Theorem characterizes the normalized eigenfunctions of the Ruelle operators $\mathcal{L}_{\varphi}$ and $\mathcal{L}_{\varphi^*}$ associated to the maximal eigenvalue $\lambda_{\varphi} = \lambda_{\varphi^*}$, in terms of the involution kernel and the eigenprobabilities $\rho_{\varphi}$ and $\rho_{\varphi^*}$, given by Theorem \ref{RPF-theorem}. }
%(\rafa{The existence of such eigenfunctions was assured in Theorem \ref{RPF-theorem}.})
 In particular, this result works for the map that appears in \eqref{involution-kernel-example}.

\begin{theorem}
\label{bilateral-extension}
\victor{Assume that the potentials $\varphi \in \mathcal{H}_{\alpha}(\mathcal{B}(A, I))$, $W \in \mathcal{H}_{\alpha}(\widehat{\mathcal{B}(A, I)})$ and $\varphi^* \in \mathcal{H}_{\alpha}(\mathcal{B}(A, I)^*)$ satisfy} \eqref{dual-potential}. Set 
\[
c := \log\bigl((\rho_{\varphi^*} \times \rho_{\varphi})(({\bf 1}_I \circ A \circ \pi_{1,1})e^W)\bigr) \,.
\]
Then: 
\begin{enumerate} 
\item %There are explicit expressions for the eigenfunctions associated to the operators $\mathcal{L}_{\varphi}$ and $\mathcal{L}_{\varphi^*}$ in terms of the eigenprobabilities $\rho_{\varphi}$ and $\rho_{\varphi^*}$, given by Theorem \ref{RPF-theorem}: 
\rafa {If we define 
\[
f = \rho_{\varphi^*}\bigl(({\bf 1}_I \circ A \circ \pi_{1, 1})(y, \cdot)e^{W(y|\cdot) - c}\bigr) \,,
\]
\[
f^* = \rho_{\varphi}\bigl(({\bf 1}_I \circ A \circ \pi_{1, 1})(\cdot, x)e^{W(\cdot|x) - c}\bigr) \,.
\]
Then,} $\mathcal{L}_{\varphi}(f) = \lambda_{\varphi}f$, $\mathcal{L}_{\varphi^*}(f^*) = \lambda_{\varphi^*}f^*$, $\rho_{\varphi}(f) = 1 = \rho_{\varphi^*}(f^*)$ and $\lambda_{\varphi} = \lambda_{\varphi^*}$.
\item Let $\mu_{\varphi}$ be the Gibbs state associated to the potential $\varphi$, given by item (3) of Theorem \ref{RPF-theorem}. There is a natural extension of $\mu_{\varphi}$ to a Borelian measure $\mu_{\widehat{\varphi}}$ on the set $\widehat{\mathcal{B}(A, I)}$, which is given by
\[
d\mu_{\widehat{\varphi}} := ({\bf 1}_I \circ A \circ \pi_{1, 1})e^{W - c}d(\rho_{\varphi^*} \times \rho_{\varphi}) \,.
\]

By a natural extension we mean that, for any $\psi \in \mathcal{C}(\mathcal{B}(A, I))$, the potential $\widehat{\psi} \in \mathcal{C}(\widehat{\mathcal{B}(A, I)})$ defined by $\widehat{\psi}(y|x) = \psi(x)$ for each $(y|x) \in \widehat{\mathcal{B}(A, I)}$ satisfies
\[
\mu_{\widehat{\varphi}}(\widehat{\psi}) = \mu_{\varphi}(\psi) \,,
\]
and for any $\psi^* \in \mathcal{C}(\mathcal{B}(A, I)^*)$, the potential $\widehat{\psi} \in \mathcal{C}(\widehat{\mathcal{B}(A, I)})$ defined by $\widehat{\psi}(y|x) = \psi^*(y)$ for each $(y|x) \in \widehat{\mathcal{B}(A, I)}$ satisfies
\[
\mu_{\widehat{\varphi}}(\widehat{\psi}) = \mu_{\varphi^*}(\psi^*) \,.
\]

Furthermore, any accumulation point $\widehat{\mu}_\infty$ of the family $(\mu_{t \widehat{\varphi}})_{t > 1}$ at infinity is maximizing for the potential $\widehat{\varphi}$ in the following way
\[
\widehat{\mu}_\infty(\widehat{\varphi}) = m(\varphi) \,.
\]
\end{enumerate}
\end{theorem}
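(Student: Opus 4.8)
\section*{Proof proposal for Theorem \ref{bilateral-extension}}

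The plan is to verify all assertions by direct computation with the dual relation \eqref{involution-kernel}, together with the eigenvalue and eigenmeasure properties guaranteed by Theorem \ref{RPF-theorem} for both $\varphi$ and $\varphi^*$. First I would prove item (1). Starting from the candidate $f(x) = \rho_{\varphi^*}\bigl(({\bf 1}_I \circ A \circ \pi_{1,1})(y,x)e^{W(y|x)-c}\bigr)$, apply $\mathcal{L}_{\varphi}$; using the definition \eqref{Ruelle-operator}, push the $a$-integral inside and use the identity $(\widehat{\varphi}^* + W)(ya|x) = (\widehat{\varphi} + W)(y|ax)$ from \eqref{involution-kernel} to rewrite $e^{\varphi(ax)}e^{W(y|ax)} = e^{\varphi^*(ya)}e^{W(ya|x)}$ whenever $(y|ax) \in \widehat{\mathcal{B}(A,I)}$, i.e.\ whenever $({\bf 1}_I \circ A \circ \pi_{1,1})(y,ax)=1$. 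Then the double integral over $a \in s(x_1)$ (against $\nu$) and $y$ (against $\rho_{\varphi^*}$) is, after Fubini, exactly $\rho_{\varphi^*}$ applied to $\mathcal{L}_{\varphi^*}$ of the relevant section of the indicator-weighted kernel; since $\mathcal{L}^*_{\varphi^*}(\rho_{\varphi^*}) = \lambda_{\varphi^*}\rho_{\varphi^*}$, this produces the factor $\lambda_{\varphi^*}$ times $f$. The care needed here is to keep track of the characteristic function ${\bf 1}_I \circ A$ so that only admissible concatenations contribute, and to confirm that $\lambda_\varphi$ must equal $\lambda_{\varphi^*}$: this follows because $f$ is then a strictly positive H\"older eigenfunction of $\mathcal{L}_\varphi$, and by the simplicity/maximality in item (1) of Theorem \ref{RPF-theorem} its eigenvalue is forced to be $\lambda_\varphi$. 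The normalization $\rho_\varphi(f) = 1$ is immediate from Fubini and the definition of the constant $c$, which is precisely $\log$ of the $(\rho_{\varphi^*}\times\rho_\varphi)$-integral of $({\bf 1}_I \circ A \circ \pi_{1,1})e^W$; symmetrically for $f^*$.

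Next I would establish item (2). Define $d\mu_{\widehat\varphi} := ({\bf 1}_I \circ A \circ \pi_{1,1})e^{W-c}\,d(\rho_{\varphi^*}\times\rho_\varphi)$; the choice of $c$ makes this a probability measure. To check the first extension identity, take $\psi \in \mathcal{C}(\mathcal{B}(A,I))$ and $\widehat\psi(y|x) = \psi(x)$; then $\mu_{\widehat\varphi}(\widehat\psi) = \int \psi(x)\bigl[\int ({\bf 1}_I \circ A \circ \pi_{1,1})(y,x)e^{W(y|x)-c}d\rho_{\varphi^*}(y)\bigr]d\rho_\varphi(x) = \int \psi(x) f(x)\,d\rho_\varphi(x) = \rho_\varphi(\psi f)$, and by item (3) of Theorem \ref{RPF-theorem} (applied to $\varphi$, since $f = f_\varphi$ by simplicity and the normalization $\rho_\varphi(f)=1$) this equals $\mu_\varphi(\psi)$. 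The second identity is the mirror computation integrating first in $x$ against $\rho_\varphi$ to obtain $f^*$, giving $\mu_{\varphi^*}(\psi^*)$. One should also note that $\mu_{\widehat\varphi}$ is $\widehat\sigma$-invariant: this can be checked either from the involution-kernel cohomology relation \eqref{dual-potential} directly on cylinders, or deduced from the fact that $\mu_\varphi$ and $\mu_{\varphi^*}$ are $\sigma$- and $\sigma^*$-invariant and that $\widehat\mu_{\widehat\varphi}$ projects to both — the cleanest route is the cocycle computation $\widehat\sigma_*\mu_{\widehat\varphi} = \mu_{\widehat\varphi}$ using $e^{W\circ\widehat\sigma^{-1} - W} = e^{\widehat\varphi^* \circ \widehat\sigma}/e^{\widehat\varphi}$ rearranged from \eqref{dual-potential}, though invariance is not strictly required for the stated conclusions.

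Finally, for the maximizing statement, fix $t > 1$ and apply item (1)–(2) with $\varphi$ replaced by $t\varphi$ (with involution kernel $tW$ and dual $t\varphi^* $, since \eqref{involution-kernel-example} is linear in $\varphi$), obtaining $\mu_{t\widehat\varphi}$ and the identity $\mu_{t\widehat\varphi}(\widehat{t\varphi}) = \mu_{t\varphi}(\varphi)\cdot t$, hence $\mu_{t\widehat\varphi}(\widehat\varphi) = \mu_{t\varphi}(\varphi)$. By the variational principle for the pressure proved in Section \ref{theory-PF-section} (the tool behind Proposition \ref{ground-states}), $\mu_{t\varphi}(\varphi) \to m(\varphi)$ as $t \to \infty$ along the relevant subsequence, and any weak-$*$ accumulation point $\widehat\mu_\infty$ of $(\mu_{t\widehat\varphi})_{t>1}$ satisfies $\widehat\mu_\infty(\widehat\varphi) = \lim \mu_{t\widehat\varphi}(\widehat\varphi) = \lim\mu_{t\varphi}(\varphi) = m(\varphi)$, using that $\widehat\varphi$ is continuous and bounded on the compact set $\widehat{\mathcal{B}(A,I)}$. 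The main obstacle I anticipate is the bookkeeping in item (1): correctly carrying the characteristic function ${\bf 1}_I \circ A \circ \pi_{1,1}$ through the change of variables $a \cdot x \leftrightarrow y \cdot a$ so that the admissibility constraints on the bilateral subshift match up on both sides, and justifying the Fubini interchange (which is harmless here by positivity and compactness, but must be stated). Everything else is a direct consequence of Theorem \ref{RPF-theorem} and the defining cohomological identity for the involution kernel.
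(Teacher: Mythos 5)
Your proposal is correct and follows essentially the same route as the paper: the paper isolates your inline change-of-variables computation as a separate lemma (the identity $\mathcal{L}_{\varphi^*}(({\bf 1}_I \circ A \circ \pi_{1,1})(\cdot,x)e^{W(\cdot|x)})(y) = \mathcal{L}_{\varphi}(({\bf 1}_I \circ A \circ \pi_{1,1})(y,\cdot)e^{W(y|\cdot)})(x)$, proved from \eqref{involution-kernel}) and then runs the eigenfunction computation in the reverse direction, inserting $\lambda_{\varphi^*}^{-1}\mathcal{L}^*_{\varphi^*}$ into the definition of $f$ and applying Fubini, while the extension identities and the zero-temperature limit are handled exactly as you describe. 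One minor point: ``simplicity/maximality'' alone does not force $\lambda_{\varphi^*}=\lambda_{\varphi}$ (maximality only gives $|\lambda_{\varphi^*}|\leq\lambda_{\varphi}$); either apply $\rho_{\varphi}$ to both sides of $\mathcal{L}_{\varphi}(f)=\lambda_{\varphi^*}f$ and use $\rho_{\varphi}(f)=1$, or argue as the paper does via the strict positivity of $f$ together with item (4) of Theorem \ref{RPF-theorem}.
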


%\rafa{In the setting of potentials depending on two coordinates (that is, when $f(x) = f(x_1, x_2)$ for each $x \in \mathcal{B}(A, I)$), the expressions stated in the above Theorem allow to find eigenprobabilities and Gibbs states via a Markovian transition kernel. Furthermore, in that case  Gibbs states can be characterized as stationary Markov measures,  the so called Markovian transition kernel depends on the involution kernel $W$, and regularity conditions on the involution kernel (for example differentiability on each one of the coordinates), imply the same regularity on the eigenfunction associated to the potential (see for instance \cite{MR2864625}, \cite{MR3377291} and \cite{MR2496111}). 

\victor{We observe that regularity conditions on the involution kernel (for example differentiability on each one of the coordinates), imply the same regularity on the eigenfunction associated to the potential (see \cite{MR3377291}). In some cases, for instance, when the potential $\varphi$ is a function of product type on a finite alphabet and all the sequences taking values into the alphabet are allowed (i.e. in the case of finite full shifts), it follows that the eigenprobability $\rho_\varphi$ and the eigenfunction $f_\varphi$ have an explicit form in terms of the potential $\varphi$.

Indeed, fixing $M = \{-1, 1\}$, $A(i, j) = 1$ for $i, j \in \{-1, 1\}$, and a continuous potential of the form $\varphi(x) = \sum_{n=1}^\infty a_n x_n$, it follows that the involution kernel is of the form $W_\varphi(x) = \sum_{n=1}^\infty (x_n + y_n)\sum_{i = n+1}^\infty a_i$. Furthermore, for any cylinder $[a_1, ... , a_n]$, we have $\rho_\varphi([a_1, ... , a_n]) = \prod_{k=1}^n \mu_k (a_k)$, where each measure $\mu_k$ on $M$ is given by $\mu_k(a) := \Bigl( \prod_{i=1}^k e^{-a_i - a_i a} + \prod_{i=1}^k e^{a_i - a_i a}\Bigr)^{-1}$ for $a \in \{-1, 1\}$. Besides that, we have $f_\varphi(x) = \prod_{n=1}^\infty f_n(x_n)$, where $f_n(a) = \prod_{k = n+1}^\infty e^{a_k a}$ for $a \in \{-1, 1\}$, and $\lambda_\varphi = \prod_{n=1}^\infty e^{-a_n} + \prod_{n=1}^\infty e^{a_n}$ (see for details \cite{MR3656287}, \cite{CiLo17} and \cite{Moh18}).}

\section{Theory of Perron-Frobenius}
\label{theory-PF-section}

The theory of Perron-Frobenius is a useful tool to find vector subspaces that remain invariant by the action of a linear operator, which, in the context of thermodynamical formalism, arises as a way to find probability measures that optimize the energy of a system modeled on a topological subshift with interactions described by a potential, the above through eigenvalues and eigenvectors associated to a transfer operator and its corresponding dual. In this section we will prove  Theorem \ref{RPF-theorem} and Proposition \ref{RPF-theorem-SFT}. We will also prove a variational principle in order to show that the Gibbs states from Theorem \ref{RPF-theorem} and Proposition \ref{RPF-theorem-SFT} are in fact equilibrium states.

\rafa{We now present the proof of Theorem \ref{RPF-theorem}:}

\begin{proof}[Proof of Theorem \ref{RPF-theorem}]
\label{proof-RPF-theorem}
Define $\mathcal{T}_{t, \varphi}$ as the operator assigning to each $u \in \mathcal{C}(\mathcal{B}(A, I))$ the function $\mathcal{T}_{t, \varphi}(u) = \log\bigl( \mathcal{L}_{\varphi}(e^{tu}) \bigr)$. Since the Ruelle operator preserves the set of continuous functions, it follows that $\mathcal{T}_{t, \varphi}(u) \in \mathcal{C}(\mathcal{B}(A, I))$. We begin by proving that, for each $t \in (0, 1)$, the operator $\mathcal{T}_{t, \varphi}$ is a uniform contraction. 

Indeed, for any pair $u, v \in \mathcal{C}(\mathcal{B}(A, I))$ we have
 %Moreover, for each $t \in (0, 1)$, the operator $\mathcal{T}_{t, \varphi}$ is a uniform contraction. Indeed, for any pair $u_1, u_2 \in \mathcal{C}(\mathcal{B}(A, I))$ is satisfied

\victor{
\begin{equation*}
\|\mathcal{T}_{t, \varphi}(u) - \mathcal{T}_{t, \varphi}(v)\|_{\infty}
%= \Bigl\| \log\Bigl( \frac{\mathcal{L}_{\varphi}(e^{tu})}{\mathcal{L}_{\varphi}(e^{tv})} \Bigr) \Bigr\|_{\infty} 
\leq \Bigl\| \log\Bigl( \frac{e^{t\|u - v\|_{\infty}}\mathcal{L}_{\varphi}(e^{tv})}{\mathcal{L}_{\varphi}(e^{tv})} \Bigr) \Bigr\|_{\infty} 
= t\|u - v\|_{\infty} \nonumber \,.
\end{equation*}
}

By the above, as a consequence of the Banach's Fixed Point Theorem, it follows that for each $t \in (0, 1)$ there is a function $u_t \in \mathcal{C}(\mathcal{B}(A, I))$ such that $\mathcal{T}_{t, \varphi}(u_t) = u_t$, that is, $e^{u_t} = \mathcal{L}_{\varphi}(e^{tu_t})$. 

Now, we will check that the family $(u_t)_{0<t<1}$ is equicontinuous. Since $s$ is locally constant, for each $z \in \mathcal{B}(A, I)$, there is $\epsilon_z > 0$ such that for any $y \in \mathcal{B}(A, I)$ such that $y_1 \in (z_1 - \epsilon_z, z_1 + \epsilon_z)$ we have $s(y_1) = s(z_1)$. 
Denote by 
\[
V_z = \{y \in \mathcal{B}(A, I) : y_1 \in (z_1 - \epsilon_z, z_1 + \epsilon_z) \} \,.
\]

Then, $V_z$ is an open neighborhood of $z$ in $\mathcal{B}(A, I)$, 
%\rafa{and $\cup_{z\in \mathcal{B}(A, I)}V_z$ is an open cover of the compact set $\mathcal{B}(A, I)$. Let $\delta$ be a Lebesgue number of this covering.}
and for any pair of points $x, y \in V_z$, %such that $d(x,y)<\delta$, 
we have $s(x_1) = s(y_1) = s(z_1)$. Thus, for each $t \in (0, 1)$ and any pair $x, y \in V_z$, we have
\victor{
 \begin{equation*}
e^{u_t(x)} 
= \mathcal{L}_{\varphi}(e^{tu_t})(x) 
%&\leq \sup\Bigl\{e^{\varphi(ax) - \varphi(ay) + tu_t(ax) - tu_t(ay)} : a \in s(z_1) \Bigr\} \mathcal{L}_{\varphi}(e^{tu_t})(y) \nonumber \\
\leq \sup\Bigl\{e^{\varphi(ax) - \varphi(ay) + tu_t(ax) - tu_t(ay)} : a \in s(z_1) \Bigr\} e^{u_t(y)} \,.
\end{equation*}
}

The above implies that 
\[
|u_t(x) - u_t(y)| \leq \sup\{\varphi(ax) - \varphi(ay) + tu_t(ax) - tu_t(ay) : a \in s(z_1)\} \,.
\]

Moreover, if we use the notation $a_0 := z_1$, following an inductive argument, it is easy to check that for any $n \in \mathbb{N}$, each $a^n = a_n \ldots a_1$, and any pair $x, y \in V_z$, we have 
\begin{align}
&|u_t(x) - u_t(y)| \nonumber \\ 
&\leq \sup\Bigl\{\sum_{j=1}^n t^{j-1}(\varphi(a^jx) - \varphi(a^jy)) + t^n(u_t(a^nx) - u_t(a^ny)) : a_{j+1} \in s(a_j) \Bigr\} \nonumber \\
&\leq \sum_{j=1}^n \frac{t^{j-1}}{2^{\alpha j}}\mathrm{Hol}_{\varphi}d(x, y)^{\alpha} + 2t^n\|u_t\|_{\infty} \nonumber \,.
\end{align} 
%\rafa{(Note that the distance $d(a^jx,a^jy)<d(x,y)$ is bounded by the Lebesgue number $\delta$, and therefore $s(a^jx)=s(a^jy)$ for each $a^j=a_j \ldots a1$, which shows we can use the inductive argument above.)}

Then, taking the limit when $n \to \infty$ in the right side of the last inequality, it follows that 
\victor{
\begin{equation*} 
|u_t(x) - u_t(y)| 
\leq \sum_{j = 1}^{\infty} \frac{t^{j-1}}{2^{\alpha j}}\mathrm{Hol}_{\varphi}d(x, y)^{\alpha} \nonumber \\ 
%= \frac{1}{2^{\alpha} - t}\mathrm{Hol}_{\varphi}d(x, y)^{\alpha} 
< \frac{1}{2^{\alpha} - 1}\mathrm{Hol}_{\varphi}d(x, y)^{\alpha} 
%\leq \mathrm{Hol}_{\varphi}d(x, y)^{\alpha} 
\,.
\end{equation*}
}

By the above, the function $u_t|_{V_z}$ is H\"older continuous. Furthermore, denoting by $\mathrm{Hol}_{t, z}$ the corresponding H\"older \victor{constant of $u_t|_{V_z}$,} we have $\mathrm{Hol}_{t, z} \leq \frac{1}{2^{\alpha} - 1}\mathrm{Hol}_{\varphi}$ and thus $|u_t(x) - u_t(y)| \leq \frac{1}{2^{\alpha} - 1}\mathrm{Hol}_{\varphi} d(x,y)^{\alpha}$ for any pair of points $x, y \in V_z$.

Since $\mathcal{B}(A, I)$ is a compact set and  $\mathcal{B}(A, I) \subset \cup_{z \in \mathcal{B}(A, I)} V_z$, there is a finite collection of points $\{z^1, \ldots, z^n\}$ such that $\mathcal{B}(A, I) \subset \cup_{i = 1}^n V_{z^i}$, which implies that $u_t \in \mathcal{H}_{\alpha}(\mathcal{B}(A, I))$, with $\mathrm{Hol}_{u_t} \leq c\mathrm{Hol}_{\varphi}$ for some constant $c > 0$ that depends only on the collection $\{z^1, \ldots, z^n\}$. 

Therefore, for any $t \in (0, 1)$ and each $x, y \in \mathcal{B}(A, I)$ we have
\begin{equation}
\label{equicontinuity} 
|u_t(x) - u_t(y)| \leq c\mathrm{Hol}_{\varphi} d(x, y)^{\alpha} \,.
\end{equation} 
As a consequence, the family $(u_t)_{0 < t < 1}$ is equicontinuous, as we wanted to prove. 

Now we define $u_t^* = u_t - \max(u_t)$. The family  $(u_t^*)_{0<t<1}$ is: (a) equicontinuous and (b) uniformly bounded \victor{(see for instance \cite{MR2864625} and \cite{MR3377291}). 
Furthermore, taking $u = \lim_{n \to \infty} u_{t_n}^*$ and $\kappa = \lim_{n \to \infty}(1 - t_n)\max(u_{t_n})$, where $(t_n)_{n \in \mathbb{N}}$ is a suitable sequence (see for details \cite{MR2864625}), we obtain that $\mathcal{L}_{\varphi}(e^{u}) = e^{\kappa}e^{u}$.
%where the last  equality is a consequence of the Dominated Convergence Theorem. 
%Using the fact that, for any $x, y \in \mathcal{B}(A, I)$ we have
%\[
%|u(x) - u(y)| \leq c\mathrm{Hol}_{\varphi} d(x, y)^{\alpha} \,,
%\]
Besides that, $u$ results in a H\"older continuous function which implies} that $e^{u}$  belongs to the set $\mathcal{H}_{\alpha}(\mathcal{B}(A, I))$ and, also, is strictly positive. Hereafter, we will use the notation $f_{\varphi} = e^{u}$ and $\lambda_{\varphi} = e^{\kappa}$.

Now we will check that the eigenvalue $\lambda_{\varphi}$ is simple. In order to do that, assume that $f_1$ is another eigenfunction associated to the eigenvalue $\lambda_{\varphi}$. Set $\widetilde{t} = \min\Bigl\{\frac{f_1}{f_{\varphi}}\Bigr\}$, which is well defined because the function $f_{\varphi}$ is a strictly positive continuous function defined on a compact set. Moreover, by continuity of the function $\frac{f_1}{f_{\varphi}}$ and compactness of the set $\mathcal{B}(A, I)$, there is $\widetilde{x} \in \mathcal{B}(A, I)$ such that $\widetilde{t} = \frac{f_1(\widetilde{x})}{f_{\varphi}(\widetilde{x})}$. Thus, $f_2 \equiv f_1 - \widetilde{t} f_{\varphi}$ is a non-negative continuous function that attains its minimum value at $0$ in the point $\widetilde{x}$, which implies that
\victor{
\[
0 
= \lambda_{\varphi}^n f_2(\widetilde{x}) 
%= \mathcal{L}_{\varphi}^n f_2(\widetilde{x}) 
= \int_{s(a_{n-1})} \ldots \int_{s(\widetilde{x}_1)}e^{S_n \varphi(a^n\widetilde{x})}f_2(a^n\widetilde{x}) d\nu(a_1) \ldots d\nu(a_n) \,.
\]
}

In particular, since $\nu$ has full support and $e^{S_n\varphi}, f_2$ are non-negative continuous functions, we obtain that $f_2(a^n\widetilde{x}) = 0$ for each word $a^n = a_n \ldots a_1$ such that $a_1 \in s(\widetilde{x}_1), a_2 \in s(a_1), \ldots, a_n \in s(a_{n-1})$. Now, it follows from the transitivity of the set of admissible sequences $\mathcal{B}(A, I)$ that the set $\cup_{n=0}^{\infty}\sigma^{-n}(\{\widetilde{x}\})$ is a dense subset of $\mathcal{B}(A, I)$ (see for instance section $4.2$ in \cite{MR1450400}), which implies that $f_2 \equiv 0$ as a consequence of the continuity. Therefore, we obtain that the eigenvalue $\lambda_{\varphi}$ is simple.

To finish the proof of item (1) of Theorem \ref{RPF-theorem}, we still have to
 prove that $\lambda_{\varphi}$ is a maximal eigenvalue for the operator $\mathcal{L}_{\varphi}$. 

Before that, we need to prove the other items of the Theorem. We begin by item (2) of Theorem \ref{RPF-theorem}: Define $\overline{\mathcal{L}_{\varphi}^*}$ as the operator assigning to each Borelian measure $\mu$ on $\mathcal{B}(A, I)$, the Borelian measure given by $\overline{\mathcal{L}_{\varphi}^*}(\mu) = \frac{1}{\mathcal{L}_{\varphi}^*(\mu)(1)}\mathcal{L}_{\varphi}^*(\mu)$. \victor{By a straightforward argument (see for instance \cite{MR0234697}), we can guarantee existence of a Borelian probability measure $\rho_{\varphi}$ such that}
%We have  $\overline{\mathcal{L}_{\varphi}^*}(\mu)(1) = 1$ which implies  this operator preserves the set of Borelian probability measures. Then, it follows from Schauder-Tychonoff's Theorem that there is a Borelian probability measure $\rho_{\varphi}$ such that 
\begin{equation}
\label{dual-fixed-point}
\overline{\mathcal{L}_{\varphi}^*}(\rho_{\varphi}) = \rho_{\varphi} \,. 
\end{equation}

%In particular 
%\begin{align}
%\rho_{\varphi}(f_{\varphi}) = \overline{\mathcal{L}_{\varphi}^*}(\rho_{\varphi})(f_{\varphi}) 
%&= \frac{1}{\mathcal{L}_{\varphi}^*(\rho_{\varphi})(1)}\mathcal{L}_{\varphi}^*(\rho_{\varphi})(f_{\varphi}) \nonumber \\
%&= \frac{1}{\mathcal{L}_{\varphi}^*(\rho_{\varphi})(1)}\rho_{\varphi}(\mathcal{L}_{\varphi}(f_{\varphi})) = \frac{\lambda_{\varphi}}{\mathcal{L}_{\varphi}^*(\rho_{\varphi})(1)}\rho_{\varphi}(f_{\varphi}) \nonumber \,. 
%\end{align} 

\victor{The above implies $\mathcal{L}_{\varphi}^*(\rho_{\varphi})(1) = \lambda_{\varphi}$.} Thus, by \eqref{dual-fixed-point}, it follows that $\mathcal{L}_{\varphi}^*(\rho_{\varphi}) = \lambda_{\varphi} \rho_{\varphi}$, which concludes the proof of item (2) of Theorem \ref{RPF-theorem}.

Define $d\mu_{\varphi} = f_{\varphi}d\rho_{\varphi}$, which we will assume w.l.o.g. a probability measure (choosing a suitable eigenfunction $f_{\varphi}$ in such a way that $\rho_{\varphi}(f_{\varphi}) = 1$, \victor{ that is, the so called normalized one). It is not difficult to check that $\mu_{\varphi}$ is a fixed point for the operator $\mathcal{L}_{\overline{\varphi}}^*$ 
%Indeed, by definition of $\overline{\varphi}$, for any $\psi \in \mathcal{H}_{\alpha}(\mathcal{B}(A, I))$, we have
%\begin{align}
%\mathcal{L}_{\overline{\varphi}}^*(\mu_{\varphi})(\psi) = \mu_{\varphi}(\mathcal{L}_{\overline{\varphi}}(\psi)) 
%&= \frac{1}{\lambda_{\varphi}} \rho_{\varphi}(\mathcal{L}_{\varphi}(\psi f_{\varphi})) \nonumber \\
%&= \frac{1}{\lambda_{\varphi}}\mathcal{L}_{\varphi}^*(\rho_{\varphi})(\psi f_{\varphi}) = \rho_{\varphi}(\psi f_{\varphi})(\psi) = \mu_{\varphi}(\psi) \label{fixed-point} \,.
%\end{align}
%Besides that, it follows by the above that for any $\psi \in \mathcal{H}_{\alpha}(\mathcal{B}(A, I))$ is satisfied
%\begin{equation}
%\label{invariant-Gibbs-state}
%\mu_{\varphi}(\psi \circ \sigma) = \mathcal{L}_{\overline{\varphi}}^*(\mu_{\varphi})(\psi \circ \sigma) = \mu_{\varphi}(\mathcal{L}_{\overline{\varphi}}(\psi \circ \sigma)) = \mu_{\varphi}(\psi) \,.
%\end{equation}
and the above implies that the probability measure $\mu_{\varphi}$ is $\sigma$-invariant, which concludes the proof of item (3) of Theorem \ref{RPF-theorem}.

%\vspace{0.5 cm}
%
%\rafa{ATÉ AQUI TUDO PARECE OK NESTA DEMONSTRACAO }
%
%\vspace{0.5 cm}

In order to prove the item (4) of Theorem \ref{RPF-theorem}, first note} that for any pair $\varphi, \psi \in \mathcal{H}_{\alpha}(\mathcal{B}(A, I))$ we have
\[
\Bigl|\mathcal{L}_{\overline{\varphi}}(\psi)(x) - \mathcal{L}_{\overline{\varphi}}(\psi)(y)\Bigr| \leq \frac{1}{2^{\alpha}}\Bigl(\mathrm{Hol}_{e^{\overline{\varphi}}}\|\psi\|_{\infty} + \mathrm{Hol}_{\psi}\Bigr)d(x, y)^{\alpha} \,.
\]

\victor{Then, it follows from an inductive argument that
%\[
%\Bigl|\mathcal{L}^n_{\overline{\varphi}}(\psi)(x) - \mathcal{L}^n_{\overline{\varphi}}(\psi)(y)\Bigr| \leq \Bigl(\mathrm{Hol}_{e^{\overline{\varphi}}}\|\psi\|_{\infty}\Bigl(\sum^n_{j=1}\frac{1}{2^{j\alpha}}\Bigr) + \frac{\mathrm{Hol}_{\psi}}{2^{n\alpha}}\Bigr)d(x, y)^{\alpha} \,,
%\]
%which implies
%\victor{a desigualdade é valida, mas o argumento do limite está errado}
\begin{equation}
\label{preserves-Holder}
\Bigl|\mathcal{L}^n_{\overline{\varphi}}(\psi)(x) - \mathcal{L}^n_{\overline{\varphi}}(\psi)(y)\Bigr| \leq \frac{2^{\alpha}}{2^{\alpha} - 1}\Bigl(\mathrm{Hol}_{e^{\overline{\varphi}}}\|\psi\|_{\infty} + \mathrm{Hol}_{\psi}\Bigr)d(x, y)^{\alpha} \,.
\end{equation}

The last} inequality means the sequence $(\mathcal{L}^n_{\overline{\varphi}}(\psi))_{n \in \mathbb{N}}$ is equicontinuous. Besides that, \eqref{preserves-Holder} guarantees that the operator $\mathcal{L}_{\overline{\varphi}}$ preserves the set $\mathcal{H}_\alpha(\mathcal{B}(A, I))$. Furthermore, since $\mathcal{L}^n_{\overline{\varphi}}(1) = 1$ for each $n \in \mathbb{N}$, it follows that $\|\mathcal{L}^n_{\overline{\varphi}}(\psi)\|_{\alpha} \leq \|\psi\|_{\alpha}$ for each $n \in \mathbb{N}$, which also implies that the sequence $(\mathcal{L}^n_{\overline{\varphi}}(\psi))_{n \in \mathbb{N}}$ is uniformly bounded with the norm $\| \cdot \|_\alpha$. \victor{Therefore, as a consequence of the Arzela-Ascoli's Theorem (see for instance \cite{MR1085356}), we obtain that for all $n \in \mathbb{N}$ we have} 
%Therefore, as a consequence of the Arzela-Ascoli's Theorem, we can guarantee existence of a convergent subsequence $(\mathcal{L}^{n_k}_{\overline{\varphi}}(\psi))_{k \in \mathbb{N}}$ in the norm $\|\cdot\|_{\infty}$. Moreover, since $\mathcal{L}^{n_k}_{\overline{\varphi}}(\psi) \in \mathcal{H}_{\alpha}(\mathcal{B}(A, I))$ for each $k \in \mathbb{N}$, it follows immediately that the function $\widetilde{\psi} = \lim_{k \to \infty} \mathcal{L}^{n_k}_{\overline{\varphi}}(\psi)$ belongs to $\mathcal{H}_{\alpha}(\mathcal{B}(A, I))$ as well. Also we have that for each $n \in \mathbb{N}$ is satisfied 
%\[
%\sup\{\mathcal{L}^{n+1}_{\overline{\varphi}}({\psi})(x) : x \in \mathcal{B}(A, I)\} \leq \sup\{\mathcal{L}^n_{\overline{\varphi}}({\psi})(x) : x \in \mathcal{B}(A, I) \} \,,
%\]
%which implies that we have, for all $n \in \mathbb{N}$, 
\[
\sup\{\widetilde{\psi}(x) : x \in \mathcal{B}(A, I)\} = \sup\{\mathcal{L}^n_{\overline{\varphi}}(\widetilde{\psi})(x) : x \in \mathcal{B}(A, I) \} \,.
\] 

Define as $\widetilde{\psi}_0 =  \sup\{\widetilde{\psi}(x) : x \in \mathcal{B}(A, I)\}$. Thus, we can choose a collection $\{x^n : n \in \mathbb{N} \cup \{0\}\}$ such that for all $n \in \mathbb{N}$, 
\[
\mathcal{L}^n_{\overline{\varphi}}(\widetilde{\psi})(x^n) =  \widetilde{\psi}_0 \,.
\]

The above implies that for each $n \in \mathbb{N}$,  \victor{ 
\[
0 
%= \mathcal{L}^n_{\overline{\varphi}}(\widetilde{\psi}_0 - \widetilde{\psi})(x^n) 
= \int_{s(a_{n-1})} \ldots \int_{s(x_1)}e^{S_n\overline{\varphi}(a^nx^n)}(\widetilde{\psi}_0 - \widetilde{\psi}(a^nx^n)) d\nu(a_1)\ldots d\nu(a_n) \,.
\]}

Then, since the maps $e^{S_n\overline{\varphi}}, \widetilde{\psi}_0 - \widetilde{\psi}$ are non-negative continuous functions and the a priori probability measure $\nu$ has full support, it follows that $\widetilde{\psi}(a^nx^n) = \widetilde{\psi}_0$ for each word $a^n = a_1 \ldots a_n$ such that $a_1 \in s(x^n_1), a_2 \in s(a_1), \ldots, a_n \in s(a_{n-1})$. Since the set of admissible sequences $\mathcal{B}(A, I)$ is topologically mixing, it follows that the set $\cup_{n=1}^{\infty} \sigma^{-n}(\{x^n\})$ is dense in $\mathcal{B}(A, I)$ (see for instance section $4.2$ in \cite{MR1450400} and chapter $2$ in \cite{MR1085356}), thus, it follows that $\widetilde{\psi} \equiv \widetilde{\psi}_0$. The above implies that \victor{
\[
\widetilde{\psi} 
= \mu_{\varphi}(\widetilde{\psi}) 
%= \lim_{k \to \infty}\mu_{\varphi}(\mathcal{L}^{n_k}_{\overline{\varphi}}(\psi)) 
= \lim_{k \to \infty}\mathcal{L}^{*,n_k}_{\overline{\varphi}}(\mu_{\varphi})(\psi) = \mu_{\varphi}(\psi)\,,  
\] }
where the second equality is a consequence of the Dominated Convergence Theorem. Note that the last  equality guarantees that $\widetilde{\psi}$ is independent of the sequence $(n_k)_{k \in \mathbb{N}}$. That is, $\widetilde{\psi}$ is the unique accumulation point of the sequence $(\mathcal{L}^n_{\overline{\varphi}}(\psi))_{n \in \mathbb{N}}$, which implies that
\begin{equation}
\label{limit-spectral-gap}
\lim_{n \to \infty}\mathcal{L}^n_{\overline{\varphi}}(\psi) = \mu_{\varphi}(\psi) \,,
\end{equation}
uniformly in the norm $\|\cdot\|_{\infty}$, as we wanted to prove. 

Now, in order to finish this proof, we just need to prove that $\lambda_{\varphi}$ is a maximal and isolated eigenvalue for the operator $\mathcal{L}_{\varphi}$ when the set of admissible sequences $\mathcal{B}(A, I)$ is topologically mixing. 

First note that replacing $\overline{\varphi}$ by $\varphi$ in \eqref{preserves-Holder}, we obtain that the operator $\mathcal{L}_\varphi$ preserves the set $\mathcal{H}_\alpha(\mathcal{B}(A, I))$. Besides that, as a consequence of \eqref{limit-spectral-gap}, we have that for each $\psi \in \mathcal{H}_{\alpha}(\mathcal{B}(A, I))$ is satisfied
\begin{equation}
\label{spectral-gap} 
\lim_{n \to \infty}\lambda_{\varphi}^{-n}\mathcal{L}_{\varphi}^n(\psi) = f_{\varphi}\rho_{\varphi}(\psi) \;,
\end{equation}
uniformly in the norm $\|\cdot\|_{\infty}$.

\victor{Consider the set $V := \{\psi \in \mathcal{H}_{\alpha}(\mathcal{B}(A, I)) : \rho_{\varphi}(\psi) = 0\}$. 
%\victor{Since $f_\varphi$ is a strictly positive function and $d\mu_\varphi = f_\varphi d\rho_\varphi$, we have that $|\mu_\varphi(\psi)| \leq \|f_\varphi\|_\infty |\rho_\varphi(\psi)|$ and $|\rho_\varphi(\psi)| \leq \|1/f_\varphi\|_\infty |\mu_\varphi(\psi)|$. That is, $\rho_{\varphi}(\psi) = 0$ if, and only if, $\mu_{\varphi}(\psi) = 0$.} 
%Hence, the set $V$ also can be expressed as $V = \{\psi \in \mathcal{H}_{\alpha}(\mathcal{B}(A, I)) : \mu_{\varphi}(\psi) = 0\}$. 
Note that $V$ is a closed vector subspace} of $\mathcal{H}_{\alpha}(\mathcal{B}(A, I))$ and, by \eqref{spectral-gap}, we have that $\mathrm{span}\{f_\varphi\} \cap V = 0$. Therefore, $\mathcal{H}_{\alpha}(\mathcal{B}(A, I)) = \mathrm{span}\{f_\varphi\} \oplus V$. Besides that, for any $\psi \in V$ we have 
\[
\rho_\varphi(\mathcal{L}_\varphi(\psi)) = \mathcal{L}_\varphi^*(\rho_\varphi)(\psi) = \lambda_\varphi \rho_\varphi(\psi) = 0 \;,
\]
which implies that the subspace $V$ is invariant by the action of $\mathcal{L}_\varphi$. Now, let us assume that $\psi_\lambda \in V$ is an eigenfunction of the operator $\mathcal{L}_{\varphi}$ associated to an eigenvalue $\lambda$. Note first that $\lambda$ is necessarily different of $\lambda_{\varphi}$ because $\rho_{\varphi}(\psi_\lambda) = 0$. By \eqref{spectral-gap}, it follows that 
\[
\lim_{n \to \infty}\lambda_{\varphi}^{-n}|\lambda|^n\|\psi_\lambda\|_{\infty} = 0 \,,
\]
which implies that $|\lambda| < \lambda_{\varphi}$ and proves the maximality of the eigenvalue $\lambda_\varphi$. 

In order to prove that $\lambda_\varphi$ is an isolated eigenvalue of $\mathcal{L}_\varphi$, we consider the set $S_V := \{\psi \in V :\; \| \psi \|_\infty = 1,\; \mathrm{Hol}_\psi \leq 1\}$. First note that for any $\psi \in S_V$ we have
\[
|\psi(x) - \psi(y)| \leq \mathrm{Hol}_\psi d(x, y)^\alpha \leq d(x, y)^\alpha \;.
\]

Therefore, the set $S_V$ is equicontinuous and it is uniformly bounded with the norm $\| \cdot \|_\alpha$. By the above, it follows from the Arzela-Ascoli's Theorem that the set $S_V$ is sequentially compact in $\mathcal{C}(\mathcal{B}(A, I))$. In particular, the set $S_V$ results in a compact subset of $\mathcal{C}(\mathcal{B}(A, I))$. On other hand, for each $\psi \in S_V$ the sequence $(\mathcal{L}^n_{\overline{\varphi}}(\psi))_{n \in \mathbb{N}}$ converges to $0$ uniformly in the norm $\|\cdot\|_{\infty}$ and satisfies the following inequalities
\[
\|\mathcal{L}^{n+1}_{\overline{\varphi}}(\psi)\|_\infty = \|\mathcal{L}_{\overline{\varphi}}(\mathcal{L}^n_{\overline{\varphi}}(\psi))\|_\infty \leq \|\mathcal{L}_{\overline{\varphi}}\| \|\mathcal{L}^n_{\overline{\varphi}}(\psi)\|_\infty \leq \|\mathcal{L}^n_{\overline{\varphi}}(\psi)\|_\infty \;.
\]

That is, for each $\psi \in S_V$ the sequence $(\mathcal{L}^n_{\overline{\varphi}}(\psi))_{n \in \mathbb{N}}$ is decreasing. Therefore, it follows from the Dini's Theorem that sequence $(\mathcal{L}^n_{\overline{\varphi}})_{n \in \mathbb{N}}$ is uniformly convergent to $0$ on the set $S_V$ with the operator norm. Furthermore, the above implies that the sequence $(\lambda^{-n}\mathcal{L}^n_\varphi)_{n \in \mathbb{N}}$ is also uniformly convergent to $0$ on the set $S_V$ with the operator norm. 

Then, taking $0 <  r_0 < 1$, also by \eqref{spectral-gap}, there is $n_0 \in \mathbb{N}$ such that for any $n \geq n_0$ and each $\psi \in S_V$, 
\[
\lambda_{\varphi}^{-n}\|\mathcal{L}_{\varphi}^n(\psi)\|_\infty \leq r_0 < 1 \,.
\]

Hence, taking supremum on all the maps $\psi \in S_V$, by density of $\mathcal{H}_\alpha(\mathcal{B}(A, I))$ into $\mathcal{C}(\mathcal{B}(A, I))$, we obtain that for any $n \geq n_0$
\[
\|(\mathcal{L}_{\varphi}|_V)^n\|^{\frac{1}{n}} \leq r_0^{\frac{1}{n}} \lambda_{\varphi} < \lambda_{\varphi} \;.
\]

Therefore,  it follows that the spectral radius of the operator $\mathcal{L}_{\varphi}|_V$, denoted by $R(\mathcal{L}_{\varphi}|_V)$, is given by
\[
R(\mathcal{L}_{\varphi}|_V) := \inf_{n \in \mathbb{N}} \|(\mathcal{L}_{\varphi}|_V)^n\|^{\frac{1}{n}} \leq r_0^{\frac{1}{n_0}} \lambda_{\varphi} < \lambda_{\varphi}
\]

That is, for any eigenvalue $\lambda \neq \lambda_\varphi$ of $\mathcal{L}_{\varphi}$ we have the inequalities  $|\lambda| \leq r_0^{\frac{1}{n_0}} \lambda_{\varphi} < \lambda_{\varphi}$, which proves that the remainder of the spectrum is contained in a disk centered at zero with radius strictly smaller than $\lambda_\varphi$ and, thus, concludes the proof of item (4) of Theorem \ref{RPF-theorem}.

\victor{In order to finish the proof of Theorem \ref{RPF-theorem}, it is only necessary to prove existence of} $\lambda_{\varphi}$ maximal when the set of admissible sequences $\mathcal{B}(A, I)$ is topologically transitive. \victor{Remember that} $\mathcal{B}(A, I)$ admits a spectral decomposition. That is, there exist $k \in \mathbb{N}$ and a permutation ${\bf p}$ of the set $\{1, ..., k\}$ \victor{(which is actually a cycle of length $k$)}, such that 
\[
\mathcal{B}(A, I) = \mathcal{B}(A, I)_1 \cup ... \cup \mathcal{B}(A, I)_k \,,
\]
with $\mathcal{B}(A, I)_i$ closed for each $i \in \{1, ..., k\}$, $\mathcal{B}(A, I)_i \cap \mathcal{B}(A, I)_j = \emptyset$ when $i \neq j$, $\sigma(\mathcal{B}(A, I)_i) = \mathcal{B}(A, I)_{{\bf p}(i)}$ and each component $\mathcal{B}(A, I)_i$ is \victor{topologically mixing for the map $\sigma^k$.}

The maximality of $\lambda_{\varphi}$ is obtained from item (4) in the following way: let $\lambda_i$, with $i \in \{1, ..., k\}$, be the maximal isolated eigenvalue of the operator \victor{$\mathcal{L}^k_{\varphi}$ restricted to the component $\mathcal{B}(A, I)_i$ of the subshift $\mathcal{B}(A, I)$ (which is well defined because $\sigma^{k}(\mathcal{B}(A, I)_i) = \mathcal{B}(A, I)_i$). By item (4), $\lambda_i$ exists and there is a strictly positive function $f_i \in \mathcal{H}_\alpha(\mathcal{B}(A, I)_i)$ such that $\mathcal{L}^k_{\varphi}(f_i) = \lambda_i f_i$. Also using item (4) of this Theorem and the fact that for any pair $i, j \in \{1, ..., k\}$ there exists $n = n(i, j) < k$ such that $\sigma^{n}(\mathcal{B}(A, I)_i) = \mathcal{B}(A, I)_j$ (which is a consequence of the transitivity), we can prove that $\lambda_i = \lambda_0$ for any $i \in \{1, \hdots, k\}$. Define $\lambda_\varphi$ as the $k$-th positive root of $\lambda_0$. Since $\lambda_0$ is a simple eigenvalue of $\mathcal{L}^k_{\varphi}$ for the eigenfunction given by the equation $f_\varphi(x) := f_i(x)$ for each $x \in \mathcal{B}(A, I)_i$, with $i \in \{1, ..., k\}$, we have that $\lambda_\varphi$ results in a simple eigenvalue of $\mathcal{L}_{\varphi}$ for $f_\varphi$. Moreover, note that $f_\varphi$ is continuous because $\mathcal{B}(A, I)_i$ is closed for each $i \in \{1, ..., k\}$ and $\mathcal{B}(A, I)_i \cap \mathcal{B}(A, I)_j = \emptyset$ when $i \neq j$, which implies that $f_\varphi$ is strictly positive function belonging to $\mathcal{H}_\alpha(\mathcal{B}(A, I))$. 
%Then, the maximal eigenvalue associated to $\mathcal{L}_{\varphi}$ is $\lambda_{\varphi} := \max\{\widetilde{\lambda_i}:\; i \in \{1, ..., k\}\}$. 

We claim that the maximal  eigenvalue associated to $\mathcal{L}_{\varphi}$ is given by $\lambda_\varphi$. Indeed, if $\lambda$ is an eigenvalue of $\mathcal{L}_\varphi$ with $|\lambda| \geq \lambda_\varphi$, it follows that $\lambda_0 \geq |\lambda|^k \geq \lambda_\varphi^k = \lambda_0$, where the first one of the inequalities follows from the maximality of $\lambda_0$ (since $\lambda^k$ is an eigenvalue for $\mathcal{L}^k_{\varphi}$), and the last equality follows from the definition of $\lambda_{\varphi}$. 
The above implies that $|\lambda| = \lambda_\varphi$ and} concludes the proof of item (1) and, thus, the proof of Theorem \ref{RPF-theorem}.
\end{proof}

One of the main utilities of Theorem \ref{RPF-theorem} is that offers a new approach to prove a Ruelle's Perron-Frobenius Theorem in the setting of countable Markov shifts under similar hypotheses on the dynamics of the subshift that the ones assumed in \victor{\cite{MR1853808}, \cite{MR2003772} and \cite{MR1738951}.} We present below the proof of the mentioned result. 

\begin{proof}[Proof of Proposition \ref{RPF-theorem-SFT}]
%\victor{Os primeiros dois parágrafos aparecem com algumas modificaçoes embaixo, porque ficava meio estranho no começo, aparece depois as propriedades do mapa i}
Consider the compact set $M = \{b_k : k \in \mathbb{N}\} \cup \{b_{\infty}\}$ and let $A : M \times M \to \mathbb{R}$ be a continuous function satisfying the following for each $i, j \in \mathbb{N}$: 
\begin{enumerate}[i)]
\item $A(b_i, b_j) = 1$ if and only if ${\bf A}_{b_i, b_j} = 1$; 
\item $A(b_i, b_j) \neq 1$ if and only if ${\bf A}_{b_i, b_j} = 0$. 
\end{enumerate}

Note that $A$ satisfies the equation ${\bf A}_{b_i, b_j} = ({\bf 1}_{\{1\}} \circ A)(b_i, b_j)$ and its existence is guaranteed by Urysohn's Lemma and the property that there is $j_0 \in \mathbb{N}$ such that for any $j \geq j_0$, we have ${\bf A}_{b_i, b_j} = {\bf A}_{b_i, b_{j_0}}$ for all $i \in \mathbb{N}$.

By continuity of $A$, it is guaranteed that 
$$
A(b_i, b_{\infty}) = \lim_{j \to \infty}A(b_i, b_j) = A(b_i, b_{j_0})$$ and 
$$A(b_{\infty}, b_j) = \lim_{i \to \infty}A(b_i, b_j)\;.$$

Since there is $j_0 \in \mathbb{N}$ such that for any $j \geq j_0$, ${\bf A}_{b_i, b_j} = {\bf A}_{b_i, b_{j_0}}$ holds for all $i \in \mathbb{N}$, it follows that the map $s$ assigning to each $a \in M$ its corresponding section $s(a)$ in $A^{-1}(\{1\})$ is a locally constant map.

%\victor{Nesta parte troque \varphi por \psi, porque \varphi é fixada no enunciado}
Therefore, we can extend any $\psi \in \mathcal{H}_{\alpha}(\Sigma_{\bf A})$ to a function $\psi' : \mathcal{B}(A, \{1\}) \to \mathbb{R}$, defined as the map that assigns to each point $x \in \mathcal{B}(A, \{1\})$ the value 
\begin{equation}
\label{limit-extension}
\psi'(x) = \lim_{\substack{y \to x \\ y \in \Sigma_{\bf A}}}\psi(y) \,.
\end{equation}
 
Such limit exists because the function $\psi$ belongs to the set $\mathcal{H}_{\alpha}(\Sigma_{\bf A})$. Moreover,  $\psi' \in \mathcal{H}_{\alpha}(\mathcal{B}(A, \{1\}))$ because, for any pair $x, y \in \mathcal{B}(A, \{1\})$ we can choose sequences $(x^n)_{n \in \mathbb{N}}$ and $(y^n)_{n \in \mathbb{N}}$ taking values in $\Sigma_{\bf A}$ such that $\lim_{n \to \infty}x^n = x$ and $\lim_{n \to \infty} y^n = y$, thus, by \eqref{limit-extension} and continuity of $\psi$, it follows that \victor{
\begin{equation}
\label{Holder-extension}
|\psi'(x) - \psi'(y)| 
= \lim_{n \to \infty}\lim_{m \to \infty}|\psi(x^n) - \psi(y^n)| 
%\leq \lim_{n \to \infty}\lim_{m \to \infty}\mathrm{Hol}_{\psi}d(x^n, y^m) 
\leq \mathrm{Hol}_{\psi}d(x, y) \,,
\end{equation} }  
which implies our assertion and that $\mathrm{Hol}_{\psi'} = \mathrm{Hol}_{\psi}$. 

Define the operator $i : \mathcal{H}_{\alpha}(\Sigma_{\bf A}) \to \mathcal{H}_{\alpha}(\mathcal{B}(A, \{1\}))$ by the equation 
\[
i(\psi) = \psi' \;,
\] 
with $\psi'$ of the form in \eqref{limit-extension}. 

We claim that the operator $i$ is an isometric isomorphism with inverse given by $i^{-1}(\psi') = \psi'|_{\Sigma_{\bf A}}$ for any $\psi' \in \mathcal{H}_{\alpha}(\mathcal{B}(A, \{1\}))$ and  the equation $i \circ L_{\phi} = \mathcal{L}_{\varphi'} \circ i$ is satisfied, when $\phi$, $\varphi$ and $p$ are such as appears in Proposition \ref{RPF-theorem-SFT} and $\nu = \sum_{k = 1}^{\infty}p_k \delta_{b_k}$ is the a priori probability measure associated to $\mathcal{L}_{\varphi'}$.
%\victor{A parte que apague segue da definição do mapa i, porém agora aparece a definição do mapa \phi' embaixo} 

Indeed, by \eqref{Holder-extension}, it follows that $\mathrm{Hol}_{i(\psi)} = \mathrm{Hol}_{\psi}$ and, by \eqref{limit-extension}, we have $\|i(\psi)\|_{\infty} = \|\psi\|_{\infty}$, thus, $\|i(\psi)\|_{\alpha} = \|\psi\|_{\alpha}$, i.e., the operator $i$ is an isometry. It is not difficult to check that $i$ is injective and for any $\widetilde{\psi} \in \mathcal{H}_{\alpha}(\mathcal{B}(A, \{1\}))$ we have $i(\widetilde{\psi}|_{\Sigma_{\bf A}}) = \widetilde{\psi}$, which implies that the map is sobrejective and that $i^{-1}(\widetilde{\psi}) = \widetilde{\psi}|_{\Sigma_{\bf A}}$, moreover, the foregoing implies that $\widetilde{\psi}$ is of the form in \eqref{limit-extension}, that is, $\widetilde{\psi} = \psi'$ for some $\psi \in \mathcal{H}_{\alpha}(\Sigma_{\bf A})$. The continuity of the maps $i$ and $i^{-1}$ is a direct consequence of linearity and the equality $\|i(\varphi)\|_{\alpha} = \|\varphi\|_{\alpha}$, then, $i$ is an isomorphism. 

%\victor{O paragrafo que aparece embaixo é similar do que aparecia nos primeiros dois parágrafos da demonstração, mais nesta parte fica numa posição mais adequada}
Since $\phi = \varphi + \log(p \circ \pi_1)$ for some $\varphi \in \mathcal{H}_{\alpha}(\Sigma_{\bf A})$ and $p$ such that $p(b_k) > 0$ for each $k \in \mathbb{N}$ and $\sum_{k = 1}^{\infty}p(b_k) = 1$, we can define a function $\phi' : \mathcal{B}(A, \{1\}) \to \mathbb{R}$ given by
\[
\phi' := \varphi' + \log(p' \circ \pi_1) \,,
\]
 where $\varphi' = i(\varphi)$ and $p'$ is a function from $M$ into $[0, 1]$ defined as $p'(b_k) = p(b_k)$ for each $k \in \mathbb{N}$ and $p'(b_{\infty}) = \lim_{k \to \infty} p(b_k) = 0$. It is not difficult to check that $\phi' \in \mathcal{H}_{\alpha}(\mathcal{B}(A, \{1\}))$. Moreover, this function provides a connection between the operators $i \circ L_{\phi}$ and $\mathcal{L}_{\varphi'} \circ i$, as we will show below:

Indeed, if $\psi \in \mathcal{H}_{\alpha}(\Sigma_{\bf A})$ and $x \in \mathcal{B}(A, \{1\})$, with $x_1 \neq b_{\infty}$, we have \victor{
\begin{equation*}
(\mathcal{L}_{\varphi'} \circ i)(\psi)(x) 
= \mathcal{L}_{\varphi'}(\psi')(x) \nonumber \\
%= \sum_{a \in s(x_1)} e^{\varphi'(ax)}\psi'(ax)p'(a) 
= \sum_{a \in s(x_1) \setminus \{b_{\infty}\}} e^{\phi'(ax)}\psi'(ax) \,.
\end{equation*} }

Besides that, from the fact that for any of point $y \in \Sigma_{\bf A}$ close enough to $x$ we have $s(x_1) = s(y_1)$, it follows that \victor{
\begin{align}
(i \circ L_{\phi})(\psi)(x) 
&= i(L_{\phi}(\psi))(x) \nonumber \\
&= \lim_{\substack{y \to x \\ y \in \Sigma_{\bf A}}}\Bigl(\sum_{\substack{a \in M_0 \\ {\bf A}_{a, x_1} = 1}} e^{\varphi(ay) + \log(p(a))}\psi(ay) \Bigr) \nonumber \\
%&= \lim_{\substack{y \to x \\ y \in \Sigma_{\bf A}}}\Bigl(\sum_{a \in s(x_1) \setminus \{b_{\infty}\}} e^{\varphi(ay) + \log(p(a))}\psi(ay) \Bigr) \nonumber \\
&= \sum_{a \in s(x_1) \setminus \{b_{\infty}\}} e^{\varphi'(ax) + \log(p'(a))}\psi'(ax)  
= \sum_{a \in s(x_1) \setminus \{b_{\infty}\}} e^{\phi'(ax)}\psi'(ax) \nonumber \,.
\end{align} }

Therefore, by continuity of the functions $(\mathcal{L}_{\varphi'} \circ i)(\psi)$ and $(i \circ L_{\phi})(\psi)$, it follows that
\[
(\mathcal{L}_{\varphi'} \circ i)(\psi)(x) = (i \circ L_{\phi})(\psi)(x) \,
\]
for each $x \in \mathcal{B}(A, \{1\})$, thus, we have that $i \circ L_{\phi} = \mathcal{L}_{\varphi'} \circ i$, such as we wanted to prove.

It is widely known that any countable Markov shift $\Sigma_{\bf A}$ with irreducible matrix {\bf A} admits a decomposition of the form
\[
\Sigma_{\bf A} = \Sigma_{\bf A}^1 \cup ... \cup \Sigma_{\bf A}^p \;.
\]

Where $p$ is the period of the matrix ${\bf A}$, all the sets $\Sigma_{\bf A}^k$, with $k \in \{1, ..., p\}$, are pairwise disjoint and any component $\Sigma_{\bf A}^k$ is a countable Markov shift with aperiodic incidence matrix ${\bf A}^{(k)}$ (see for instance Remark 7.1.35 in \cite{MR1484730}). Moreover, in this case also is satisfied that ${\bf A}^{(k)}_{b_i, b_j} = ({\bf A}^p)_{b_i, b_j}$ for each $b_j \in \mathcal{C}_k$, where
\[
\mathcal{C}_k := \{b_j :\; ({\bf A}^{np + k})_{b_i, b_j} > 0 \text{ for some } n\in \mathbb{N} \} \;.
\]

The above induces a spectral decomposition in the subshift $\mathcal{B}(A, \{1\})$ of the form
\[
\mathcal{B}(A, \{1\}) = \mathcal{B}(A, \{1\})_1 \cup ... \cup \mathcal{B}(A, \{1\})_p \;,
\]
that satisfies the hypothesis of Theorem \ref{RPF-theorem}.

On other hand, by item (1) of Theorem \ref{RPF-theorem}, since $\varphi' \in \mathcal{H}_{\alpha}(\mathcal{B}(A, \{1\}))$, it is guaranteed existence of a $\lambda_{\phi'} > 0$ and a strictly positive function $f_{\phi'} \in \mathcal{H}_{\alpha}(\mathcal{B}(A, \{1\}))$ such that 
\begin{equation}
\label{eigenfunction-SFT}
\mathcal{L}_{\varphi'}(f_{\phi'}) = \lambda_{\phi'}f_{\phi'} \,.
\end{equation}  

Note that in fact $\lambda_{\phi'} = \lambda_{\varphi'}$ and $f_{\phi'} = f_{\varphi'}$ in the notation of Theorem \ref{RPF-theorem} i.e. $\varphi' = \phi' + \log(p' \circ \pi_1)$ . However, in this proof it is more convenient to use the notation that appears in \eqref{eigenfunction-SFT}.

Taking $\lambda_{\phi} = \lambda_{\phi'}$ and $f_{\phi} = f_{\phi'}|_{\Sigma_{\bf A}}$, we assert that $f_{\phi}$ is an eigenfunction of $L_{\phi}$ associated to the eigenvalue $\lambda_{\phi}$, that is, $L_{\phi}f_{\phi} = \lambda_{\phi}f_{\phi}$. 

Indeed, by the above definition we have $i(f_{\phi}) = f_{\phi'}$, which implies that 
\[
L_{\phi}f_{\phi} = i^{-1}(\mathcal{L}_{\varphi'}(f_{\phi'})) = i^{-1}(\lambda_{\phi}f_{\phi'}) = \lambda_{\phi}i^{-1}(f_{\phi'}) = \lambda_{\phi}f_{\phi} \,.
\]

Moreover, following the argument of the proof of Theorem \ref{RPF-theorem}, we can prove that $\lambda_{\phi}$ is simple and maximal, which concludes the proof of item 1. of Proposition \ref{RPF-theorem-SFT}.

On other hand, by item (2) of Theorem \ref{RPF-theorem}, there is a Borelian probability measure $\rho_{\phi'}$ defined on the Borelian sets of $\mathcal{B}(A, \{1\})$ satisfying the equation $\mathcal{L}^*_{\varphi'}(\rho_{\phi'}) = \lambda_{\phi}\rho_{\phi'}$. In fact $\rho_{\phi'} = \rho_{\varphi'}$ in the notation of Theorem \ref{RPF-theorem}, nevertheless, by simplicity we will use the notation proposed in this proof.

Define $\rho_{\phi'} \circ i : \mathcal{H}_{\alpha}(\Sigma_{\bf A}) \to \mathbb{R}$ as the linear functional assigning to each $\psi \in \mathcal{H}_{\alpha}(\Sigma_{\bf A})$ the value 
\[
(\rho_{\phi'} \circ i)(\psi) = \rho_{\phi'}(i(\psi)) = \rho_{\phi'}(\psi') \;.
\]

Note that as a consequence of the characterization of the weak* topology by H\"older continuous functions and the fact that $(\rho_{\phi'} \circ i)(1) = 1$, it follows that $\rho_{\phi'} \circ i$ define a Borelian probability measure on $\Sigma_{\bf A}$. Hereafter, we will use the following notation for such probability measure $\rho_{\phi} := \rho_{\phi'} \circ i$.

We claim that $\rho_{\phi}$ is an eigenprobability of the operator $L^*_{\phi}$ associated to the eigenvalue $\lambda_{\phi}$, that is, $L^*_{\phi}(\rho_{\phi}) = \lambda_{\phi}\rho_{\phi}$. 

Indeed, for any $\psi \in \mathcal{H}_{\alpha}(\Sigma_{\bf A})$ we have\victor{
\begin{align}
L^*_{\phi}(\rho_{\phi})(\psi) = \rho_{\phi}(L_{\phi}(\psi)) 
%&= \rho_{\phi'}((i \circ L_{\phi})(\psi)) \nonumber \\
%&= \rho_{\phi'}((\mathcal{L}_{\varphi'} \circ i)(\psi)) \nonumber \\
&= \rho_{\phi'}(\mathcal{L}_{\varphi'}(\psi')) \nonumber \\
&= \mathcal{L}^*_{\varphi'}(\rho_{\phi'})(\psi') = \lambda_{\phi}\rho_{\phi'}(\psi') = \lambda_{\phi}\rho_{\phi}(\psi ) \nonumber \,.
\end{align} }

The above concludes the proof of the item (2) of Proposition \ref{RPF-theorem-SFT}.

Since $L_{\overline{\phi}}(\psi) = \frac{1}{\lambda_{\phi}}L_{\phi}(\psi f_{\phi})$ \victor{for any $\psi \in \mathcal{H}_{\alpha}(\Sigma_{\bf A})$, 
%following a similar procedure to the one that appears in \eqref{fixed-point} and \eqref{invariant-Gibbs-state}, 
it is not difficult to show that the measure $d\mu_{\phi} = f_{\phi}d\rho_{\phi}$,} which we can assume w.l.o.g. a probability measure, is a fixed point for the operator $L^*_{\phi}$ and is $\sigma$-invariant. The foregoing concludes the proof of item (3) of Proposition \ref{RPF-theorem-SFT}. 

In order to prove the item (4) of Proposition \ref{RPF-theorem-SFT}, first note that the aperiodicity of the matrix ${\bf A}$ implies that the set $\mathcal{B}(A, \{1\})$ is topologically mixing, which implies item (4) of Theorem \ref{RPF-theorem} for the extension of any bounded potential in $\mathcal{H}(\Sigma_{\bf A})$.

On other hand, since $i : \mathcal{H}_{\alpha}(\Sigma_{\bf A}) \to \mathcal{H}_{\alpha}(\mathcal{B}(A, \{1\}))$ is an isomorphism, it follows that for any $\psi \in \mathcal{H}_{\alpha}(\Sigma_{\bf A})$, we have $i^{-1}(\psi') = \psi'|_{\Sigma_{\bf A}} = \psi$ and $\rho_{\phi'}(\psi') = \rho_{\phi}(i^{-1}(\psi')) = \rho_{\phi}(\psi'|_{\Sigma_{\bf A}}) = \rho_{\phi}(\psi)$. The above, joint with the fact that $f_{\phi} = i^{-1}(f_{\phi'}) = f_{\phi'}|_{\Sigma_{\bf A}}$, implies that \victor{
\begin{equation}
\label{prime}
\mu_{\phi'}(\psi') = \rho_{\phi'}(\psi'f_{\phi'}) 
%= \rho_{\phi}(i^{-1}(\psi'f_{\phi'})) = \rho_{\phi}((\psi'f_{\phi'})|_{\Sigma_{\bf A}}) 
= \rho_{\phi}(\psi f_{\phi})) = \mu_{\phi}(\psi) \,.
\end{equation} }

Thus, by item (4) of Theorem \ref{RPF-theorem}, it follows that for each $x \in \Sigma_{\bf A}$, \victor{
\begin{align}
\mu_{\phi}(\psi)
&= \mu_{\phi'}(\psi') \nonumber \\
&= \lim_{n \to \infty} \mathcal{L}^n_{\overline{\varphi'}}(\psi')(x) \nonumber \\
&= \lim_{n \to \infty} \sum_{a_n \in s(a_{n-1})} \ldots \sum_{a_1 \in s(x_1)}e^{S_n \overline{\varphi'}(a^n x)}\psi'(a^n x) p'(a_1) \ldots p'(a_n) \nonumber \\ 
%&= \lim_{n \to \infty} \sum_{a_n \in s(a_{n-1}) \setminus \{b_{\infty}\}} \ldots \sum_{a_1 \in s(x_1) \setminus \{b_{\infty}\}}e^{S_n \overline{\varphi'}(a^n x)}\psi'(a^n x) p'(a_1) \ldots p'(a_n) \nonumber \\
&= \lim_{n \to \infty} \sum_{\substack{a_n \in M_0 \\ {\bf A}_{a_n, a_{n-1}} = 1}} \ldots \sum_{\substack{a_1 \in M_0 \\ {\bf A}_{a_1, x_1} = 1}}e^{S_n \overline{\varphi}(a^n x)}\psi(a^n x) p(a_1) \ldots p(a_n) \nonumber \\
&= \lim_{n \to \infty} L^n_{\overline{\phi}}(\psi)(x) \nonumber \,.
\end{align} }

Moreover, since the limit $\mu_{\phi'}(\psi') = \lim_{n \to \infty} \mathcal{L}^n_{\overline{\varphi'}}(\psi')$ is uniform on $\mathcal{B}(A, \{1\})$ in the norm $\|\cdot\|_\infty$ by item (4) of Theorem \ref{RPF-theorem}, it follows that $\mu_{\phi}(\psi) = \lim_{n \to \infty} L^n_{\overline{\phi}}(\psi)$ uniformly on $\Sigma_{\bf A}$ in the norm $\|\cdot\|_\infty$ as well, which concludes the proof of item (4) of Proposition \ref{RPF-theorem-SFT}.

The proof of maximality of $\lambda_{\phi}$ when the matrix ${\bf A}$ is irreducible and the proof of the spectral gap when ${\bf A}$ is aperiodic follow in a similar way that in the proof of Theorem \ref{RPF-theorem}. The above concludes the proof of Proposition \ref{RPF-theorem-SFT}.

\end{proof}

In \cite{MR1738951} appears a characterization of a class of potentials in which the Ruelle's Perron-Frobenius Theorem holds in the setting of topologically mixing countable Markov shifts. This class of potentials are the so called positive recurrent potentials. Below we will show that the potential $\phi$  defined in Proposition \ref{RPF-theorem-SFT} belongs to such class. 

Consider $a \in M_0$ and $[a] = \{x \in \Sigma_{\bf A} : x_1 = a\}$. Also define 
\[
Z_n(\phi, a) := \sum_{\substack{\sigma^n(y) = y \\ y \in \Sigma_{\bf A}}}e^{S_n \phi(y)}{\bf 1}_{[a]}(y) \,.
\]

Under the assumption that $\Sigma_{\bf A}$ is topologically mixing, we say that the potential $\phi$ is {\bf positive recurrent}, if there are $N_a \in \mathbb{N}$ and $C_a > 0$, such that
\[
\frac{1}{\lambda^n_{\phi}}Z_n(\phi, a) \in [C_a^{-1}, C_a] \,,
\]
 for each $n \geq N_a$.   
 
As a consequence of the definition of $\overline{\phi}$ and the item (4) in Proposition \ref{RPF-theorem-SFT}, it follows that for any $x \in [a]$ we have
\[
0 < \inf\{f_{\phi}(y) : y \in \Sigma_{\bf A}\} \leq \lim_{n \to \infty} \frac{1}{\lambda^n_{\phi}}L^n_{\phi}(1)(x) = \lim_{n \to \infty} \frac{1}{\lambda^n_{\phi}} \sum_{\substack{a^n \in M_0^n \\ a^nx \in \Sigma_{\bf A}}}e^{S_n \phi(a^nx)} \,. 
\]

By the above, there are $N_1 \in \mathbb{N}$ and $C_1 > 0$, such that
\begin{equation}
\label{bound-sum-1}
\frac{1}{\lambda^n_{\phi}} \sum_{\substack{a^n \in M_0^n \\ a^nx \in \Sigma_{\bf A}}}e^{S_n \phi(a^nx)} \in [C_1^{-1}, C_1] \,,
\end{equation}
for each $n \geq N_1$.

On other hand, since $\Sigma_{\bf A}$ is topologically mixing, there exists $N_2 \in \mathbb{N}$ such that $\sigma^{-n}([a]) \cap [a] \neq \emptyset$ for each $n \geq N_2$, thus, there is a periodic point of the form $\widetilde{y}^n = \overline{a b_n \ldots b_1}$. Moreover, since ${\bf A}_{b_1, a} = 1$, it follows that $b^nx \in \Sigma_{\bf A}$, with $b^n = b_n, \ldots, b_1$.

Choosing $N_a = \max\{N_1, N_2\}$, we have that for each $n \geq N_a$, equation \eqref{bound-sum-1}, and the following inequality
\[
|S_n\phi(b^nx) - S_n\phi(\sigma(\widetilde{y}^n))| \leq \frac{2^{\alpha}}{2^{\alpha} - 1}\mathrm{Hol}_{\varphi} = C_2 \,.
\]
are satisfied.

Besides that, for any $a^n \in M_0^n$, such that, $a^nx \in \Sigma_{\bf A}$,  we have
\[
|S_n\phi(b^nx) - S_n\phi(a^nx)| \leq C_2 \,,
\]
thus,
\[
|S_n\phi(a^nx) - S_n\phi(\sigma(\widetilde{y}^n))| \leq 2C_2 \,.
\]

The foregoing implies that
\begin{align}
|S_n\phi(a^nx) - S_{n+1}\phi(\widetilde{y}^n)| 
&= |S_n\phi(a^nx) - S_{n+1}\phi(\sigma(\widetilde{y}^n))| \nonumber \\
&\leq 2C_2 + \|\varphi\|_{\infty} + \log(p(a)) = C_3 \nonumber \,.
\end{align}

Then, the following inequalities are satisfied 
\[
\sum_{\substack{a^n \in M_0^n \\ a^nx \in \Sigma_{\bf A}}}e^{S_n \phi(a^nx) - C_3} \leq Z_{n+1}(\phi, a) \leq \sum_{\substack{a^n \in M_0^n \\ a^nx \in \Sigma_{\bf A}}}e^{S_n \phi(a^nx) + C_3} \,,
\]
which, by \eqref{bound-sum-1}, is equivalent to say that
\begin{equation}
\frac{1}{\lambda^{n+1}_{\phi}}Z_{n+1}(\phi, a) \in [(C_1e^{C_3})^{-1}, C_1e^{C_3}] \,.
\end{equation}

Therefore, taking $C_a = C_1e^{C_3}$ we obtain that $\phi$ is positive recurrent, such as we wanted to prove.

Now we will prove a variational principle of the pressure, with the aim to show that the Gibbs states found in Theorem \ref{RPF-theorem} and in Proposition \ref{RPF-theorem-SFT} result in equilibrium states, that is, $\sigma$-invariant probability measures that optimize the energy of the system, which since a theoretical approach are the observables that attain the supremum in the variational principle. In order to do that, we will introduce a definition of entropy, which has been widely studied in another settings (see for instance \cite{MR3377291, 1LoVa19}).

Given a $\sigma$-invariant probability measure $\mu$, we define the {\bf entropy of $\mu$} as
\[
h(\mu) := \inf \bigl\{\mu(\log(\mathcal{L}_0(u)) - \log(u)) : u \in \mathcal{C}^+(\mathcal{B}(A, I)) \bigr\} \,.
\] 

\victor{It is easy to check that the entropy map is upper semi-continuous and $h(\mu) \leq 0$ for any $\mu \in \mathcal{M}_\sigma(\mathcal{B}(A, I))$. Moreover, $h(\mu) < 0$ for any $\mu \in \mathcal{M}_\sigma(\mathcal{B}(A, I))$ when $\mathcal{B}(A, I) \subsetneq M^{\mathbb{N}}$ and} the entropy of the Gibbs state $\mu_\varphi$ associated to a potential $\varphi \in \mathcal{H}_{\alpha}(\mathcal{B}(A, I))$ satisfies the following equation  
\[
h(\mu_{\varphi}) = -\mu_{\varphi}(\overline{\varphi}) \,, 
\] 
which guarantees that the supremum that appears in the variational principle below is in fact attained in the Gibbs state $\mu_{\varphi}$ \victor{(see for instance \cite{1LoVa19}).}

\begin{lemma}
\label{variational-principle}
\victor{Consider} a potential $\varphi \in \mathcal{H}_{\alpha}(\mathcal{B}(A, I))$ and $\mu_{\varphi}$ a Gibbs state associated to $\varphi$. Then, the following variational principle is satisfied: 
\[
\log(\lambda_{\varphi}) = h(\mu_{\varphi}) + \mu_{\varphi}(\varphi) = \sup \bigl\{ h(\mu) + \mu(\varphi) : \mu \in \mathcal{M}_{\sigma}(\mathcal{B}(A, I)) \bigr\} \,.
\] 
\end{lemma}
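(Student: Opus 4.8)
The plan is to prove both equalities at once by a two-sided estimate: an upper bound $h(\mu)+\mu(\varphi)\le\log\lambda_{\varphi}$ valid for every $\mu\in\mathcal{M}_{\sigma}(\mathcal{B}(A,I))$, and a matching lower bound attained at the Gibbs state $\mu_{\varphi}$. The algebraic backbone will be two operator identities: first $\mathcal{L}_{0}(\psi)=\mathcal{L}_{\varphi}(e^{-\varphi}\psi)$, immediate from \eqref{Ruelle-operator}; and second, writing $\overline{\varphi}=\varphi+\log f_{\varphi}-\log(f_{\varphi}\circ\sigma)-\log\lambda_{\varphi}$ and using $f_{\varphi}(\sigma(ax))=f_{\varphi}(x)$, the normalization identity $\mathcal{L}_{\overline{\varphi}}(\psi)=(\lambda_{\varphi}f_{\varphi})^{-1}\mathcal{L}_{\varphi}(f_{\varphi}\psi)$, so that in particular $\mathcal{L}_{\overline{\varphi}}(1)=1$ and $\mathcal{L}_{\overline{\varphi}}$ averages each argument against a probability kernel on $\mathcal{B}(A,I)$.

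For the upper bound I would simply test the infimum defining $h(\mu)$ with the single function $u=e^{\varphi}f_{\varphi}\in\mathcal{C}^{+}(\mathcal{B}(A,I))$. Then $\mathcal{L}_{0}(u)=\mathcal{L}_{\varphi}(f_{\varphi})=\lambda_{\varphi}f_{\varphi}$, hence $\log\mathcal{L}_{0}(u)-\log u=\log\lambda_{\varphi}-\varphi$ pointwise, and integrating gives $h(\mu)\le\mu(\log\lambda_{\varphi}-\varphi)=\log\lambda_{\varphi}-\mu(\varphi)$ for every Borel probability measure $\mu$ on $\mathcal{B}(A,I)$; notice this half does not even use $\sigma$-invariance. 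This already yields $\sup\{h(\mu)+\mu(\varphi):\mu\in\mathcal{M}_{\sigma}(\mathcal{B}(A,I))\}\le\log\lambda_{\varphi}$.

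For the lower bound I would show $h(\mu_{\varphi})\ge\log\lambda_{\varphi}-\mu_{\varphi}(\varphi)$, i.e. that every $u\in\mathcal{C}^{+}(\mathcal{B}(A,I))$ satisfies $\mu_{\varphi}(\log\mathcal{L}_{0}(u)-\log u)+\mu_{\varphi}(\varphi)\ge\log\lambda_{\varphi}$. Setting $w:=e^{-\varphi}u/f_{\varphi}\in\mathcal{C}^{+}(\mathcal{B}(A,I))$, the two identities above give $\mathcal{L}_{0}(u)=\lambda_{\varphi}f_{\varphi}\,\mathcal{L}_{\overline{\varphi}}(w)$ and $u=e^{\varphi}f_{\varphi}w$, so that $\log\mathcal{L}_{0}(u)-\log u+\varphi-\log\lambda_{\varphi}=\log\mathcal{L}_{\overline{\varphi}}(w)-\log w$. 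Since $\mathcal{L}_{\overline{\varphi}}(1)=1$, Jensen's inequality applied to the convex function $-\log$ gives the pointwise bound $\log\mathcal{L}_{\overline{\varphi}}(w)\ge\mathcal{L}_{\overline{\varphi}}(\log w)$; integrating against $\mu_{\varphi}$ and using that $\mu_{\varphi}$ is the fixed point of $\mathcal{L}_{\overline{\varphi}}^{*}$ from item (3) of Theorem \ref{RPF-theorem} yields $\mu_{\varphi}(\log\mathcal{L}_{\overline{\varphi}}(w))\ge\mu_{\varphi}(\mathcal{L}_{\overline{\varphi}}(\log w))=\mu_{\varphi}(\log w)$, hence $\mu_{\varphi}(\log\mathcal{L}_{\overline{\varphi}}(w)-\log w)\ge0$ for every such $u$. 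Combining this with the upper bound applied to $\mu_{\varphi}$ gives $h(\mu_{\varphi})+\mu_{\varphi}(\varphi)=\log\lambda_{\varphi}$, and since $\mu_{\varphi}\in\mathcal{M}_{\sigma}(\mathcal{B}(A,I))$ the supremum is attained there, completing the proof. As an alternative route to the first equality, one may instead read it off directly from the already-noted fact $h(\mu_{\varphi})=-\mu_{\varphi}(\overline{\varphi})$ together with the $\sigma$-invariance of $\mu_{\varphi}$, which forces $\mu_{\varphi}(\overline{\varphi})=\mu_{\varphi}(\varphi)-\log\lambda_{\varphi}$.

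Apart from the bookkeeping, the only genuinely delicate point is the Jensen step: one must check that for each $x$ the functional $\psi\mapsto\mathcal{L}_{\overline{\varphi}}(\psi)(x)$ really is integration against a probability measure on $\mathcal{B}(A,I)$ — this is exactly $\mathcal{L}_{\overline{\varphi}}(1)=1$ together with positivity of $e^{\overline{\varphi}}$ and full support of $\nu$ on $s(x_1)$ — and that $\log w$, although not in $\mathcal{C}^{+}$, is continuous, so $\mathcal{L}_{\overline{\varphi}}(\log w)$ is well defined, the pointwise inequality $\log\mathcal{L}_{\overline{\varphi}}(w)\ge\mathcal{L}_{\overline{\varphi}}(\log w)$ is meaningful, and everything integrates (finiteness being guaranteed by compactness of $\mathcal{B}(A,I)$). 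Everything else reduces to manipulating the defining formulas for $\mathcal{L}_{0}$, $\mathcal{L}_{\varphi}$ and $\mathcal{L}_{\overline{\varphi}}$.
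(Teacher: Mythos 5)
Your argument is correct and is essentially the standard proof that the paper itself omits, deferring instead to Lemma 2.7 and Theorem 2.8 of \cite{1LoVa19}: the upper bound by testing the infimum defining $h(\mu)$ with $u=e^{\varphi}f_{\varphi}$, and the matching lower bound at $\mu_{\varphi}$ via $\mathcal{L}_{\overline{\varphi}}(1)=1$, Jensen's inequality, and the fixed-point property $\mathcal{L}^*_{\overline{\varphi}}(\mu_{\varphi})=\mu_{\varphi}$ are exactly the ingredients of the cited proofs. The only point worth making explicit is that $w=e^{-\varphi}u/f_{\varphi}$ is bounded away from zero on the compact set $\mathcal{B}(A,I)$, so $\log w$ is a bounded continuous function and the duality step $\mu_{\varphi}(\mathcal{L}_{\overline{\varphi}}(\log w))=\mu_{\varphi}(\log w)$ is legitimate; you already flag this.
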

\begin{proof}
The proof of this Lemma can be obtained following a similar procedure as in the proofs of Lemma 2.7 and Theorem 2.8 in \cite{1LoVa19}.
\end{proof}

As a consequence of the lemma above, we have that \victor{
\begin{equation}
\label{entropybounded}
 \sup \bigl\{ h(\mu) : \mu \in \mathcal{M}_{\sigma}(\mathcal{B}(A, I)) \bigr\} \leq 0,
\end{equation} }
which will be a necessary result in the proof of Proposition \ref{ground-states} below.
 
By compactness of the set $\mathcal{B}(A, I)$, it is guaranteed existence of $\varphi$-maximizing measures associated to a potential $\varphi \in \mathcal{H}_{\alpha}(\mathcal{B}(A, I))$ as accumulation points in the weak* topology of the family of equilibrium states $(\mu_{t\varphi})_{t>1}$, which are known as ground-states (see for instance \cite{MR3114331}, \cite{MR2818689} and \cite{2LoVa19}). The above, is the main tool that we will use in the proof of Proposition \ref{ground-states}.

\begin{proof}[Proof of Proposition \ref{ground-states}]
Consider the family of equilibrium states $(\mu_{\phi_t})_{t>1}$. Note that existence of the equilibrium state $\mu_{\phi_t}$ for each $t > 1$ is a consequence of aperiodicity of the matrix ${\bf A}$. Since $(t\varphi)' = t\varphi'$ for each $t > 1$, by \eqref{prime}, for any $\psi \in \mathcal{H}_{\alpha}(\Sigma_{\bf A})$ and each $t > 1$  we have $\mu_{{\phi_t}}(\psi) =  \mu_{{\phi'_t}}(\psi')$, where $\phi'_t := t\varphi' + \log(p' \circ \pi_1)$. Besides that, as $\mathcal{B}(A, \{1\})$ is a compact set, there is an accumulation point $\mu'_{\infty}$ of the family $(\mu_{\phi'_t})_{t>1}$ when $t \to \infty$. That is, there exists a sequence $(t_n)_{n \in \mathbb{N}}$ with $\lim_{n \to \infty}t_n = \infty$, such that each $\psi' \in \mathcal{H}_{\alpha}(\mathcal{B}(A, \{1\}))$ satisfies 
\begin{equation}
\label{limit-equilibrium-states}
\mu'_{\infty}(\psi') = \lim_{n \to \infty}\mu_{\phi'_{t_n}}(\psi') = \lim_{n \to \infty}\mu_{\phi_{t_n}}(\psi) \,.
\end{equation}

Define $\mu_{\infty} := \mu'_{\infty} \circ i$. Then, as a consequence of the characterization of the weak* topology by H\"older continuous functions, it follows that $\mu_{\infty}$ is a Borelian probability measure on $\Sigma_{\bf A}$. Moreover, by \eqref{limit-equilibrium-states}, it follows that for any $\psi \in \mathcal{H}_{\alpha}(\Sigma_{\bf A})$ we have 
\[
\mu_{\infty}(\psi) = \mu'_{\infty}(i(\psi)) = \mu'_{\infty}(\psi') = \lim_{n \to \infty}\mu_{\phi_{t_n}}(\psi) \,,
\]
which implies that $\lim_{n \to \infty}\mu_{\phi_{t_n}} = \mu_{\infty}$ in the weak* topology. On other hand, by Lemma \ref{variational-principle}, for each $t > 1$, 
\[
h(\mu_{\phi'_t}) + t \mu_{\phi'_t}(\varphi') = \sup \bigl\{ h(\mu) + t \mu( \varphi') : \mu \in \mathcal{M}_{\sigma}(\mathcal{B}(A, \{1\})) \bigr\} \,,
\]
which implies, using \eqref{entropybounded}, that $\mu'_{\infty} = \lim_{n \to \infty}\mu_{\phi'_{t_n}}$ is a $\varphi'$-maximizing measure (see for instance \cite{MR3114331} and \cite{2LoVa19}). Thus, for any $\mu' \in \mathcal{M}_{\sigma}(\mathcal{B}(A, \{1\}))$, it follows
\[
\mu_{\infty}(\varphi) = \mu'_{\infty}(i(\varphi)) = \mu'_{\infty}(\varphi') \geq \mu'(\varphi') \,.
\]

In particular, for any $\mu \in \mathcal{M}_{\sigma}(\mathcal{B}(A, \{1\}))$ such that $\mathrm{supp}(\mu) \subset \Sigma_{\bf A}$, we have 
\[
\mu_{\infty}(\varphi) \geq \mu(\varphi') = \mu(\varphi'|_{\Sigma_{\bf A}}) = \mu(\varphi) \,
\]
and that means $\mu_{\infty}$ is a $\varphi$-maximizing measure, as we wanted to prove.
\end{proof}

\begin{remark}
Since any countable Markov shift $\Sigma_{\bf A}$ with irreducible incidence matrix ${\bf A}$ admits a spectral decomposition (see for details the proof of Proposition \ref{RPF-theorem-SFT}), it follows that the results obtained in Proposition \ref{ground-states} can be extended to the case of countable Markov shifts with irreducible incidence matrix.
\end{remark}

\section{Involution Kernel}
\label{involution-kernel-section}

The involution kernel is a useful tool to find maximizing measures in bilateral topological subshifts from the theory of transfer operators, because, joint with the Livsic's Theorem, provides a connection between bilateral and unilateral topological subshifts via cohomology. \rafa {In this section we present the proof of Theorem \ref{bilateral-extension}, where the involution kernel is used to characterize the normalized eigenfunction of the Ruelle operator associated to the maximal eigenvalue, in terms of the eigenprobabilities $\rho_{\varphi}$ and $\rho_{\varphi^*}$, 
	given by Theorem \ref{RPF-theorem}.}
	%, through a characterization involving the integral of a function that depends on the kernel with respect to the eigenprobability of the corresponding dual of the Ruelle operator. 

In order to prove Theorem \ref{bilateral-extension} it is necessary to prove the following Lemma:

\begin{lemma}
\label{transpose-Ruelle-operator}
Let $\varphi \in \mathcal{H}_{\alpha}(\mathcal{B}(A, I))$, $W \in \mathcal{H}_{\alpha}(\widehat{\mathcal{B}(A, I)})$ and $\varphi^* \in \mathcal{H}_{\alpha}(\mathcal{B}(A, I)^*)$ be potentials satisfying \eqref{dual-potential}. Then, for any pair $(y, x) \in \mathcal{B}(A, I)^* \times \mathcal{B}(A, I)$, we have 
\[
\mathcal{L}_{\varphi^*}(({\bf 1}_I \circ A \circ \pi_{1, 1})(\cdot, x)e^{W(\cdot | x)})(y) = \mathcal{L}_{\varphi}(({\bf 1}_I \circ A \circ \pi_{1, 1})(y, \cdot)e^{W(y | \cdot)})(x) \,.
\]
\end{lemma}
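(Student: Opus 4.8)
The plan is to unfold both Ruelle operators using their defining integral formulas and then exhibit a measure-preserving change of variables that matches the two integrands term by term. Concretely, writing out the left-hand side,
\[
\mathcal{L}_{\varphi^*}\bigl(({\bf 1}_I \circ A \circ \pi_{1,1})(\cdot, x)e^{W(\cdot|x)}\bigr)(y) = \int_{s^*(y_1)} e^{\varphi^*(ya)}({\bf 1}_I \circ A \circ \pi_{1,1})(ya, x)e^{W(ya|x)}\, d\nu(a),
\]
and the right-hand side,
\[
\mathcal{L}_{\varphi}\bigl(({\bf 1}_I \circ A \circ \pi_{1,1})(y, \cdot)e^{W(y|\cdot)}\bigr)(x) = \int_{s(x_1)} e^{\varphi(ax)}({\bf 1}_I \circ A \circ \pi_{1,1})(y, ax)e^{W(y|ax)}\, d\nu(a).
\]
The first thing I would do is observe that, in the integrand on the left, the factor $({\bf 1}_I \circ A \circ \pi_{1,1})(ya, x) = {\bf 1}_I(A(a, x_1))$, so it is nonzero precisely when $a \in s(x_1)$; symmetrically, in the integrand on the right, $({\bf 1}_I \circ A \circ \pi_{1,1})(y, ax) = {\bf 1}_I(A(y_1, a))$, which is nonzero precisely when $a \in s^*(y_1)$. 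Hence both integrals, after inserting these indicator factors, are effectively integrals over $M$ (or over the common domain $s(x_1) \cap s^*(y_1)$) against the a priori measure $\nu$, with no change of variable needed on $a$ — the same dummy variable $a$ works on both sides.

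Once the domains are reconciled, the core is a pointwise identity of integrands: for each admissible $a$ (meaning $(y|ax) \in \widehat{\mathcal{B}(A,I)}$, equivalently $(ya|x) \in \widehat{\mathcal{B}(A,I)}$), I need
\[
e^{\varphi^*(ya) + W(ya|x)} = e^{\varphi(ax) + W(y|ax)},
\]
which, after taking logarithms, is exactly equation \eqref{involution-kernel}:
\[
(\widehat{\varphi}^* + W)(ya|x) = (\widehat{\varphi} + W)(y|ax),
\]
since $\widehat{\varphi}^*(ya|x) = \varphi^*(ya)$ and $\widehat{\varphi}(y|ax) = \varphi(ax)$ by definition of the hatted potentials. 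This is precisely the reformulation of \eqref{dual-potential} recorded just before the statement of Theorem \ref{bilateral-extension}, so the identity holds by hypothesis. Integrating this pointwise equality against $\nu$ over the common domain yields the claimed equality of the two operator values.

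The only genuine subtlety — and the step I would be most careful with — is bookkeeping the indicator functions and the two notions of section ($s$ versus $s^*$) so that the effective domains of integration genuinely coincide: one must check that $a \in s^*(y_1)$ and $a \in s(x_1)$ together are equivalent to $(y|ax)$ (equivalently $(ya|x)$) lying in $\widehat{\mathcal{B}(A,I)}$, and that the product of the two indicator factors appearing on each side reduces to the indicator of this common admissibility condition. Granting that, plus the H\"older regularity of $W$ (so all integrands are continuous and $\nu$-integrable, which is already guaranteed since $W \in \mathcal{H}_\alpha(\widehat{\mathcal{B}(A,I)})$ and $\mathcal{B}(A,I)$ is compact), no further analysis is required: the lemma is essentially a direct consequence of \eqref{involution-kernel} combined with Fubini-free, single-variable integration against the a priori measure.
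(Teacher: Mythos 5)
Your proposal is correct and follows essentially the same route as the paper: unfold both operators, use the indicator factors to rewrite each side as an integral over $M$ against $\nu$ weighted by $({\bf 1}_I\circ A)(y_1,a)({\bf 1}_I\circ A)(a,x_1)$, and apply the pointwise identity \eqref{involution-kernel} to match the integrands. The domain-reconciliation subtlety you flag is exactly what the paper handles by extending $({\bf 1}_I\circ A\circ\pi_{1,1})e^W$ by zero off $\widehat{\mathcal{B}(A,I)}$, so nothing is missing.
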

\begin{proof}
Note that any function $\psi \in \mathcal{H}_{\alpha}(\widehat{\mathcal{B}(A, I)})$ can be extended to a bounded function from $\mathcal{B}(A, I)^* \times \mathcal{B}(A, I)$ into $\mathbb{R}$, which we will denote by $\psi$ as well. Moreover, in this case, for any $V \in \mathcal{H}_{\alpha}(\widehat{\mathcal{B}(A, I)})$, the function $e^{V}\psi$ can be extended to a bounded function defined on the set $\mathcal{B}(A, I)^* \times \mathcal{B}(A, I)$, in such a way that the function $({\bf 1}_I \circ A \circ \pi_{1, 1})e^{V}\psi$ is bounded on the set $\mathcal{B}(A, I)^* \times \mathcal{B}(A, I)$ and it is equal to $0$ for each point in the set $(\mathcal{B}(A, I)^* \times \mathcal{B}(A, I)) \setminus \widehat{\mathcal{B}(A, I)}$.
 
In particular, taking $\psi \equiv 1$, $V = W$, with $W$ satisfying \eqref{dual-potential}, and using that $\widehat{\varphi}(y|x) = \varphi(x)$ and $\widehat{\varphi}^*(y|x) = \varphi^*(y)$ for each $(y|x) \in \widehat{\mathcal{B}(A, I)}$, it follows that for each $(y, x) \in \mathcal{B}(A, I)^* \times \mathcal{B}(A, I)$  \victor{
\begin{align}
&\mathcal{L}_{\varphi^*}(({\bf 1}_I \circ A \circ \pi_{1, 1})(\cdot, x)e^{W(\cdot | x)})(y) \nonumber \\
%&= \int_{s^*(y_1)}e^{\varphi^*(ya) + W(ya|x)}({\bf 1}_I \circ A)(a, x_1) d\nu(a) \nonumber \\
&= \int_M e^{\widehat{\varphi}^*(ya|x) + W(ya|x)}({\bf 1}_I \circ A)(y_1, a)({\bf 1}_I \circ A)(a, x_1) d\nu(a) \nonumber \\
&= \int_M e^{\widehat{\varphi}(y|ax) + W(y|ax)}({\bf 1}_I \circ A)(y_1, a)({\bf 1}_I \circ A)(a, x_1) d\nu(a) \nonumber \\
%&= \int_{s(x_1)} e^{\varphi(ax) + W(y|ax)}({\bf 1}_I \circ A)(y_1, a) d\nu(a) \nonumber \\
&= \mathcal{L}_{\varphi}(({\bf 1}_I \circ A \circ \pi_{1, 1})(y, \cdot)e^{W(y | \cdot)})(x) \nonumber \,.
\end{align}}
where in the third equality we use \eqref{involution-kernel}, which is equivalent to dual-potential.
\end{proof}

The proof of Theorem \ref{bilateral-extension} is such as follows:

\begin{proof}[Proof of Theorem \ref{bilateral-extension}]
Consider $\rho_{\varphi^*}$ the eigenprobability associated to $\mathcal{L}^*_{\varphi^*}$, which is defined on the Borelian sets in $\mathcal{B}(A, I)^*$. Define $f$ as the map assigning to each $x \in \mathcal{B}(A, I)$ the value $f(x) = \rho_{\varphi^*}(({\bf 1}_I \circ A \circ \pi_{1, 1})(y, x)e^{W(y| x) - c})$. Then, we have the following: 
\begin{align}
f(x) 
&= \rho_{\varphi^*}(({\bf 1}_I \circ A \circ \pi_{1, 1})(y, x)e^{W(y|x) - c}) \nonumber \\
&= \frac{1}{\lambda_{\varphi^*}}\mathcal{L}^*_{\varphi^*}(\rho_{\varphi^*})(({\bf 1}_I \circ A \circ \pi_{1, 1})(y, x)e^{W(y|x) - c}) \nonumber \\
&= \frac{1}{\lambda_{\varphi^*}}\rho_{\varphi^*}(\mathcal{L}_{\varphi^*}(({\bf 1}_I \circ A \circ \pi_{1, 1})(\cdot, x)e^{W(\cdot|x) - c})(y)) \nonumber \\
&= \frac{1}{\lambda_{\varphi^*}}\rho_{\varphi^*}(\mathcal{L}_{\varphi}(({\bf 1}_I \circ A \circ \pi_{1, 1})(y, \cdot)e^{W(y | \cdot) - c})(x)) \nonumber \\
&= \frac{1}{\lambda_{\varphi^*}}\mathcal{L}_{\varphi}(\rho_{\varphi^*}(({\bf 1}_I \circ A \circ \pi_{1, 1})(y, \cdot)e^{W(y | \cdot) - c})))(x) \nonumber \\
&= \frac{1}{\lambda_{\varphi^*}}\mathcal{L}_{\varphi}(f)(x) \nonumber \,,
\end{align}
where the second last equality is a consequence of the Fubini's Theorem.

Therefore, $f$ is an eigenfunction of the linear operator $\mathcal{L}_{\varphi}$ associated to the eigenvalue $\lambda_{\varphi^*}$. On other hand, since the set of admissible sequences $\mathcal{B}(A, I)$ is topologically mixing, by item (4) of Theorem \ref{RPF-theorem}, it follows that the only eigenvalue for which its corresponding eigenfunctions are strictly positive is the maximal one $\lambda_{\varphi}$, which implies that $\lambda_{\varphi^*} = \lambda_{\varphi}$. Thus, $\mathcal{L}_{\varphi}(f) = \lambda_{\varphi}f$ and $\rho_{\varphi}(f) = 1$.

In a similar way, taking $\rho_{\varphi}$ as the eigenprobability associated to $\mathcal{L}^*_{\varphi}$, which is defined on the Borelian sets in $\mathcal{B}(A, I)$, it is not difficult to show that $f^*$ defined as $f^*(y) = \rho_{\varphi}(({\bf 1}_I \circ A \circ \pi_{1, 1})(y, x)e^{W(y|x) - c})$ for each $y \in \mathcal{B}(A, I)^*$, satisfies the equations $\mathcal{L}_{\varphi^*}(f^*) = \lambda_{\varphi^*}f^*$ and $\rho_{\varphi^*}(f^*) = 1$, which concludes the proof of item (1) of Theorem \ref{bilateral-extension}. 

In order to prove item (2) of this Theorem, it is enough to show that for any $\psi \in \mathcal{C}(\mathcal{B}(A, I))$ we have  $\mu_{\widehat{\varphi}}(\widehat{\psi}) = \mu_{\varphi}(\psi)$. 

Indeed, we have that \victor{
\begin{align}
\mu_{\widehat{\varphi}}(\widehat{\psi})
%&= \rho_{\varphi^*} \times \rho_{\varphi}(({\bf 1}_I \circ A \circ \pi_{1, 1})e^{W - c}\widehat{\psi}) \nonumber \\
&= \rho_{\varphi}(\rho_{\varphi^*}(({\bf 1}_I \circ A \circ \pi_{1, 1})(y, x)e^{W(y|x) - c}\widehat{\psi}(y|x))) \nonumber \\
&= \rho_{\varphi}(\psi(x)\rho_{\varphi^*}(({\bf 1}_I \circ A \circ \pi_{1, 1})(y, x)e^{W(y|x) - c})) \nonumber \\
&= \rho_{\varphi}(\psi(x)f(x)) \nonumber \\
&= \mu_{\varphi}(\psi) \nonumber \,.
\end{align}}
where we used Fubini´s theorem in the second equality and in the last one we used item (3) of Theorem \ref{RPF-theorem}. The proof that for any $\psi \in \mathcal{C}(\mathcal{B}(A, I)^*)$, $\mu_{\widehat{\varphi}}(\widehat{\psi}) = \mu_{\varphi^*}(\psi^*)$ is satisfied, is analogous to the previous case. 

Let $\widehat{\mu}_\infty$ be an accumulation point at infinity of the family $(\mu_{t \widehat{\varphi}})_{t > 1}$. Then, there is a strictly increasing sequence $(t_n)_{n \in \mathbb{N}}$ such that the following limit is satisfied in the weak* topology
\[
\lim_{n \to \infty} \mu_{t_n \widehat{\varphi}} = \widehat{\mu}_\infty \;. 
\]

Hence, taking a subsequence if necessary, we can guarantee existence of a probability measure $\mu_\infty \in \mathcal{M}_{\mathrm{max}}(\varphi)$ such that
\[
\lim_{n \to \infty} \mu_{t_n \varphi} = \mu_\infty \;. 
\]

Therefore, from the properties of the weak* topology, it follows that 
\[
\widehat{\mu}_\infty(\widehat{\varphi}) = \lim_{n \to \infty} \mu_{t_n \widehat{\varphi}}(\widehat{\varphi}) = \lim_{n \to \infty} \mu_{t_n \varphi}(\varphi) = \mu_\infty(\varphi) = m(\varphi)\;.
\]

The foregoing concludes the proof of Theorem \ref{bilateral-extension}.
\end{proof}

\section*{Acknowledgments}
The authors are very grateful to the professor Artur Oscar Lopes for his unconditional support, helpful talks and extremely useful suggestions that improved the final version of this paper. The second author would to thank to  PNPD-CAPES, INCTMat and the Francisco José de Caldas Fund by the financial support during part of the development of this paper.

\end{document}